\providecommand{\U}[1]{\protect\rule{.1in}{.1in}}
\numberwithin{equation}{section}
\newtheorem{theorem}{Theorem}[section]
\newtheorem{corollary}[theorem]{Corollary}
\newtheorem{lemma}[theorem]{Lemma}
\newtheorem{proposition}[theorem]{Proposition}
\theoremstyle{definition}
\newtheorem{definition}[theorem]{Definition}
\newtheorem{remark}[theorem]{Remark}
\newtheorem{example}[theorem]{Example}
\begin{document}

\begin{center}
{\LARGE Metric and geometric relaxations of self-contracted curves }

\vspace{0.7cm}

{\large \textsc{A. Daniilidis, R. Deville, E. Durand-Cartagena}}

\bigskip
\end{center}

\noindent\textbf{Abstract} Self-contractedness (or self-expandedness,
depending on the orientation) is hereby extended in two natural ways giving
rise, for any $\lambda\in\lbrack-1,1)$, to the metric notion of $\lambda
$-curve and the (weaker) geometric notion of $\lambda$-cone property
($\lambda$-eel). In the Euclidean space $\mathbb{R}^{d}$ it is established
that for $\lambda\in\lbrack-1,1/d)$ bounded $\lambda$-curves have finite
length. For $\lambda\geq 1/\sqrt{5}$ it is always possible to construct bounded
curves of infinite length in ${\mathbb{R}}^{3}$ which do satisfy the $\lambda
$-cone property. This can never happen in ${\mathbb{R}}^{2}$ though: it is
shown that all bounded planar curves with the $\lambda$-cone property have
finite length.

\vspace{0.5cm}

\noindent\textbf{Key words} Self-contracted curve, self-expanded curve,
rectifiability, length, $\lambda$-curve, $\lambda$-cone property.

\vspace{0.5cm}

\noindent\textbf{AMS Subject Classification} \ \textit{Primary} 28A75, 52A38 ;
\textit{Secondary} 37N40, 53A04, 53B20, 52A41.

\tableofcontents


\section{Introduction}

Self-contracted curves have been introduced in \cite{DLS}. They attract a lot
of interest, since they are intimately linked to convex foliations
(\cite{CBM}, \cite{LMV}, \cite{MP}), to the proximal algorithm of a convex
function and the gradient flow of a quasiconvex potential in a Euclidean space
(\cite{DLS}, \cite{DDDL}) and recently to generalized flows in CAT(0) spaces
(\cite{OH}). The main feature of this notion is its simple purely metric definition, which
inspires developments in more general settings:

\begin{definition}
Let $(M,d)$ be a metric space and $I\subset\mathbb{R}$ be an interval. A curve
$\gamma:I\rightarrow M$ is called self-contracted, if for all $\tau\in I$, the
map $t\mapsto d(\gamma(t),\gamma(\tau))$ is non-increasing on $I\cap
(-\infty,\tau]$.
\end{definition}

The {length} of a curve $\gamma$ is defined as
\[
\ell(\gamma):=\sup\Big\{\sum_{i=0}^{m-1}d(\gamma(t_{i}),\gamma(t_{i+1}
))\Big\},
\]
where the supremum is taken over all finite increasing sequences $t_{0}<t_{1}<\cdots<t_{m}$ lying in $I$. 
The curve $\gamma$ is called {rectifiable},
if its total variation is locally bounded around any $t\in I$, that is, its
length is locally finite. \smallskip

Rectifiability and asymptotic behaviour are central questions in the study of
self-contracted curves. It is shown in \cite{DLS} that self-contracted curves
(are rectifiable and) have finite length whenever $M$ is a bounded subset of
the 2-dimensional Euclidean space. Based on ideas of \cite{MP}, the
aforementionned result was extended in \cite{DDDL}, and independently in
\cite{LMV}, to any finite dimensional Euclidean space. In \cite{DDDR} a
further extension has been established encompassing the case where $M$ is a
compact subset of a Riemannian manifold. In \cite{L} the result of \cite{DLS}
has been generalized for 2-dimensional spaces equipped with other (smooth)
norms. This has been the first result of this type outside a
Euclidean/Riemannian setting. An important breakthrough is eventually achieved
in \cite{ST} by establishing (rectifiability and) finite length for all
self-contracted curves contained on a bounded subset of any finite dimensional
normed space. Finally, rectifiability of self-contracted curves in Hadamard
manifolds and CAT(0) spaces is established in \cite{OH}.

\smallskip

The aforementioned results remain valid if we replace the assumption
\textquotedblleft$\gamma$ self-contracted\textquotedblright\ by the assumption
\textquotedblleft$\gamma$ self-expanded\textquotedblright. A curve $\gamma$ is
called \emph{self-expanded} if for all $\tau\in I$, the map $t\mapsto
d(\gamma(t),\gamma(\tau))$ is non decreasing on $I\cap\lbrack\tau,+\infty)$,
or equivalently, when the curve $\overline{\gamma}:-I\rightarrow M$ given by
$\overline{\gamma}(t)=\gamma(-t)$ is self-contracted. Thus,
$\gamma:I\rightarrow{\mathbb{R}}^{d}$ is self-expanded 
if for every $t_{1}\leq t_{2}\leq t_{3}$ in $I$ we have
\[
d(\gamma(t_{1}),\gamma(t_{2}))\leq d(\gamma(t_{1}),\gamma(t_{3})). 
\]
In the Euclidean
setting, there is a nice geometric interpretation of self-expandedness (see
\cite[Lemma 2.8]{DDDL}). A differentiable curve is self-expanded if and only
if
\[
\langle\gamma^{\prime}(t),\gamma(u)-\gamma(t)\rangle\leq0\ \hbox{
for all $u\in I$ such that $u < t$},
\]
which geometrically means that the tail of the curve (the past) is always
contained in half-space (cone of aperture $\pi$).
The notion of self-expandedness therefore admits the following two natural generalizations.
Let us fix $-1\leq\lambda<1$. 
A curve $\gamma:I\rightarrow{\mathbb{R}}^{d}$ is called $\lambda$-curve  
if for every $t_{1}\leq t_{2}\leq t_{3}$ in $I$ we have
\begin{equation}
d(\gamma(t_{1}),\gamma(t_{2}))\leq d(\gamma(t_{1}),\gamma(t_{3}))+\lambda
d(\gamma(t_{2}),\gamma(t_{3})). \label{l-curve}
\end{equation}
If $\gamma$ is continuous and admits right derivative at each point,
we say that $\gamma$ has the $\lambda$-cone-property if,
for every $t<\tau$ in $I$, we have, denoting $\gamma'(\tau)$ the right derivative,
\[
\langle\gamma'(\tau),\gamma(t)-\gamma(\tau)\rangle\leq\lambda
\,\vert\vert\gamma'(\tau)\vert\vert \,\left\vert
\left\vert \gamma(t)-\gamma(\tau)\right\vert \right\vert .
\]
As a matter of the fact, the $\lambda$-cone property will be defined more generally, for merely continuous curves 
using (forward) secants, see Definition~\ref{def_l-cone-prop} and it will be shown that every $\lambda$-curve has the $\lambda$-cone property (\textit{c.f.} Proposition~\ref{prop-implies}). However there exist smooth curves satisfying the
latter property for some $\lambda_{0}<1$ without being $\lambda$-curves for
any $\lambda\in\lbrack-1,1)$ (\textit{c.f.} Example~\ref{ex3dcurve}).
\smallskip\newline In this work we establish the following results: \smallskip

\begin{itemize}
\item if $|\cdot|$ is an equivalent norm to the Euclidean norm $||\cdot||$,
then there exists $\lambda\in\lbrack0,1)$ such that every $|\cdot
|$-self-expanded curve is a $||\cdot||$-$\lambda$-curve
(Proposition~\ref{Prop-norm-equiv}); \smallskip

\item for $\lambda<1/d$ every bounded $\lambda$-curve (is rectifiable and) has
finite length (Theorem~\ref{Thm_l-curve-rect});\smallskip

\item for $\lambda\geq1/\sqrt{5}$ there exists a bounded curve in ${\mathbb{R}
}^{3}$ with infinite length satisfying the $\lambda$-cone property
(Theorem~\ref{main}).\smallskip
\end{itemize}

Nonetheless due to topological obstructions for $d=2$ we have:

\begin{itemize}
\item for any $\lambda<1$, bounded planar curves with the $\lambda$-cone
property (and a fortiori $\lambda$-curves) have finite length
(Theorem~\ref{prop-2-dim}). \smallskip
\end{itemize}

Combining the first and the last statement, we readily obtain that all bounded
planar self-contracted curves (under any norm) are rectifiable and have finite
length. This clearly generalizes the result of \cite{L}, but it is contained
in the result of \cite{ST} that asserts that the same holds in any dimension.
Notice that the asymptotic behaviour of both $\lambda$-curves and curves with
the $\lambda$-cone property remains unknown in $\mathbb{R}^{d}$ for $d\geq3$
and $\lambda\in [1/d,1/\sqrt{5}).$

\medskip

\textbf{Notation.} Let us fix our notation. Throughout this work ${\mathbb{R}}^{d}$ 
will denote the $d$-dimensional Euclidean space endowed with the
Euclidean norm $||\cdot||$ and the scalar product $\langle\cdot,\cdot\rangle$.
We denote by $\mathbb{S}^{d-1}$ the unit sphere of ${\mathbb{R}}^{d}$, and by
$B(x,r)$ (respectively, $\overline{B}(x,r)$) the open (respectively, closed)
ball of radius $r>0$ and center $x\in{\mathbb{R}}^{d}$. A (convex) subset $C$
of $\mathbb{R}^{d}$ is called a (convex) cone, if for every $x\in C$ and $r>0$
it holds $rx\in C$. If $A$ is a nonempty subset of ${\mathbb{R}}^{d}$, we
denote by $\mathrm{int}(A)$ its interior, by $\mathrm{conv\,}(A)$ its {convex
hull} and by $\text{diam}\,A:=\sup\,\{d(x,y):x,y\in A\}$ its {diameter}.\smallskip

Given a closed convex subset $K$ of ${\mathbb{R}}^{d}$, the normal cone
$N_{K}(u_{0})$ of $K$ at $u_{0}\in K$ is the following closed convex cone (see
\cite{Rock98} \textit{e.g.}):
\[
N_{K}(u_{0})=\{v\in\mathbb{R}^{n}:\langle v,u-u_{0}\rangle\leq0,\forall u\in
K\}.
\]
Notice that $u_{0}\in K$ is the projection onto $K$ of all elements of the
form $u_{0}+tv$, where $t\geq0$ and $v\in N_{K}(u_{0})$. In the particular
case that $K$ is a closed convex pointed cone (that is, $K$ contains no
lines), then its polar (or dual) cone 
\[
K^{o}:=N_{K}(0)=\{v\in\mathbb{R}^{n}:\langle v,u\rangle\leq0,\forall u\in
K\}
\]
has nonempty interior and the bipolar theorem holds: $K^{oo}=K.$ For $\delta>0$ sufficiently small,
we denote by $K_{\delta}$ the $\delta$-enlargement of the cone $K$, that is,
the closed convex cone generated by the set $\left(  K\cap \mathbb{S}^{d-1}\right)
+B_{\delta}$, where $B_{\delta}:=B(0,\delta).$ Notice that
\begin{equation}
\left((K_{\delta})^{o}\cap \mathbb{S}^{d-1}\right)  +B_{\delta
}\,\,\subset\,K^{o}. \label{polar}
\end{equation}

\smallskip

We define the {aperture} $A(S)$ of a nonempty subset $S\subset\mathbb{S}
^{d-1}$ by
\begin{equation}
A(S):=\inf\left\{  \,\langle u_{1},u_{2}\rangle\,:\,u_{1},u_{2}\in
S\,\right\}  . \label{aperture}
\end{equation}
Based on the above notion, we define the aperture $\mathcal{A}(C)$ of a
nontrivial convex pointed cone $C$ as follows:
\[
\mathcal{A}(C)=\arccos\,\left(  A(C\cap \mathbb{S}^{d-1})\right)  .
\]
Given $v\in\mathbb{S}^{d-1}$ and $\alpha\in\lbrack0,\pi)$, we define the
\textquotedblleft open" cone directed by $v$ as follows:
\begin{equation}
C(v,\alpha)=\left\{  u\in\mathbb{R}^{d}:\,\,\langle u,v\rangle>\,||u||\,\cos
\alpha\right\}  \cup\{0\}. \label{open-cone}
\end{equation}
Notice that if $\alpha<\pi/2$, the above cone is convex and has aperture~$2\alpha$. Given $x\in{\mathbb{R}}^{d}$, we adopt the notation
\begin{equation}
C_{x}(v,\alpha):=x+C(v,\alpha). \label{cone-notation}
\end{equation}

\smallskip

A mapping $\gamma:I=[0,T_{\infty})\rightarrow{\mathbb{R}}^{d}$, where
$T_{\infty}\in\mathbb{R\cup\{+\infty\}}$ is referred in the sequel as a
{curve}. Although the usual definition of a curve comes along with continuity
and injectivity requirements for the map $\gamma$, we do not make these prior
assumptions here. By the term {continuous} (respectively, {absolutely
continuous}, {Lipschitz}, {smooth}) curve we shall refer to the corresponding
properties of the mapping $\gamma:I\rightarrow{\mathbb{R}}^{d}$. A curve
$\gamma$ is said to be {bounded} if its image, denoted by $\Gamma=\gamma(I)$,
is a bounded set of ${\mathbb{R}}^{d}$.

\smallskip

For $t\in I$ we denote by $\Gamma(t):=\{\gamma(t^{\prime})\in\Gamma:t^{\prime
}\leq t\}$ the initial part of the curve and by
\begin{equation}
K(t)=\overline{\mathrm{cone}}\mathrm{\,}(\Gamma(t)-\gamma(t)) \label{K(t)}
\end{equation}
the closed convex cone generated by $\Gamma(t)$. In particular
\begin{equation}
\Gamma(t)\subset\gamma(t)+K(t) \label{KG}
\end{equation}
Notice further that $K(t)$ contains the set $\mathrm{sec}^{-}(t)$ of (all
possible limits of) {backward secants} at $\gamma(\tau)$ which is defined as
follows (see \cite{DDDL}):
\[
\mathrm{sec}^{-}(t):=\left\{  q\in\mathbb{S}^{d-1}:q=\lim_{t_{k}\nearrow
t^{-}}\,\frac{\gamma(t_{k})-\gamma(t)}{||\gamma(t_{k})-\gamma(t)||}\right\}
,\label{sec-}
\]
where the notation $\{t_{k}\}_{k}\nearrow t^{-}$ indicates that $\{t_{k}
\}_{k}\rightarrow t$ and $t_{k}<t$ for all $k$. \smallskip

The set $\mathrm{sec}^{+}(t)$ of all possible limits of {forward secants} at
$\gamma(t)$ is defined analogously:
\[
\mathrm{sec}^{+}(t):=\left\{  q\in\mathbb{S}^{d-1}:q=\lim_{t_{k}\searrow
t^{+}}\,\frac{\gamma(t_{k})-\gamma(t)}{||\gamma(t_{k})-\gamma(t)||}\right\}
,\label{sec+}
\]
where the notation $\{t_{k}\}_{k}\searrow t^{+}$ indicates that $\{t_{k}
\}_{k}\rightarrow t$ and $t<t_{k}$ for all $k$. Compactness of $\mathbb{S}^{d-1}$ guarantees that both $\mathrm{sec}^{-}(t)$ and $\mathrm{sec}^{+}(t)$
are nonempty. If $\gamma:I\rightarrow\mathbb{R}^{d}$ is differentiable at
$t\in I$ and $\gamma^{\prime}(t)\neq0$, then $\mathrm{sec}^{+}(t)=\left\{
\frac{\gamma^{\prime}(t)}{||\gamma^{\prime}(t)||}\right\}  $.\medskip

In this work we introduce two new notions, depending on a parameter
$\lambda\in\lbrack-1,1)$. For each value of $\lambda$ we obtain
the class of $\lambda$-curves and the class of curves with the $\lambda$-cone
property. We associate to these classes an angle $\alpha\in(0,\pi]$ via the
relation
\begin{equation}
\alpha=\arccos(\lambda). \label{a(l)}
\end{equation}
As we shall see, the above classes enjoy interesting geometric properties
which can be described in terms of the angle $\alpha$. (For $\lambda=0$, which
corresponds to the angle $\alpha=\pi/2$, the above classes coincide and yield
the class of self-expanded curves.)


\section{$\lambda$-curves and curves with the $\lambda$-cone property}

\begin{definition}
[$\lambda$-curve]\label{def_l-curve} A curve $\gamma:I\rightarrow{\mathbb{R}}^{d}$ is called $\lambda$-curve $(-1\leq\lambda<1)$ if for every $t_{1}\leq
t_{2}\leq t_{3}$ in $I$ we have
\begin{equation}
d(\gamma(t_{1}),\gamma(t_{2}))\leq d(\gamma(t_{1}),\gamma(t_{3}))+\lambda
d(\gamma(t_{2}),\gamma(t_{3})). \label{l-curve}
\end{equation}

\end{definition}

The above definition yields that every $\lambda$-curve is necessarily injective and
cannot admit more than one accumulation point. Based on this, one can easily
see that every $\lambda$-curve has at most countable discontinuities. Setting
$\lambda=1$ to \eqref{def_l-curve} yields the triangle inequality of the
distance (hence no restriction) while $\lambda=-1$ corresponds to segments. On
the other hand, for $\lambda=0$ we recover the definition of a self-expanded
curve. The following result shows that the study of
self-contracted/self-expanded curves with respect to a non-Euclidean norm can
be shifted to the study of $\lambda$-curves in the Euclidean setting.

\begin{proposition}
[self-expanded vs $\lambda$-curve]\label{Prop-norm-equiv}Let $\|\cdot\|$ be
an Euclidean norm in $\mathbb{R}^{d}$ and $|\cdot|$ be another norm in $\mathbb{R}^{d}$. 
Then there exists $\lambda<1$
(depending on the equivalence constant of the norms) such that every $|\cdot
|$-self-expanded curve is a $\|\cdot\|$-$\lambda$-curve.
\end{proposition}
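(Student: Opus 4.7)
The plan is to reduce the claim to a purely vectorial statement and establish that via a compactness argument. Specifically, setting $u=\gamma(t_1)-\gamma(t_2)$ and $v=\gamma(t_1)-\gamma(t_3)$, one has $u-v=\gamma(t_3)-\gamma(t_2)$, and the hypothesis that $\gamma$ is $|\cdot|$-self-expanded becomes exactly $|u|\leq|v|$. The $\lambda$-curve conclusion \eqref{l-curve} is then equivalent to $\|u\|-\|v\|\leq\lambda\|u-v\|$. It thus suffices to prove the existence of $\lambda\in[0,1)$, depending only on the norm-equivalence constants $C_1\leq C_2$ with $C_1\|\cdot\|\leq|\cdot|\leq C_2\|\cdot\|$, such that
\[
\Lambda\,:=\,\sup\left\{\frac{\|u\|-\|v\|}{\|u-v\|}\,:\,u,v\in\R^d,\,u\neq v,\,|u|\leq|v|\right\}\,<\,1.
\]

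The ratio is invariant under the positive scaling $(u,v)\mapsto(tu,tv)$, so I first normalize by $|v|=1$; together with $|u|\leq 1$ this confines $(u,v)$ to a compact subset of $\R^d\times\R^d$ (with $\|v\|\in[1/C_2,1/C_1]$ and $\|u\|\leq 1/C_1$). I then take a sequence $(u_n,v_n)$ along which the ratio tends to $\Lambda$, extract a convergent subsequence $(u_n,v_n)\to(u_\ast,v_\ast)$ with $|v_\ast|=1$ (so $v_\ast\neq 0$), and split into two cases. If $u_\ast\neq v_\ast$, the ratio is continuous at the limit and bounded by $1$ via the Euclidean triangle inequality; equality would force $v_\ast\in[0,u_\ast]$, i.e.\ $v_\ast=s\,u_\ast$ with $s\in[0,1]$, which combined with $|u_\ast|\leq s\,|u_\ast|$ forces $u_\ast=v_\ast$, a contradiction. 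Hence $\Lambda<1$ in this case.

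The delicate case is $u_\ast=v_\ast$, where both numerator and denominator vanish and continuity arguments fail. Setting $w_n:=u_n-v_n\to 0$ and extracting $\hat w_n:=w_n/\|w_n\|\to\hat w_\ast\in\mathbb{S}^{d-1}$, an elementary expansion of $\|u_n\|^2-\|v_n\|^2$ gives
\[
\frac{\|u_n\|-\|v_n\|}{\|w_n\|}\,=\,\frac{2\langle v_n,\hat w_n\rangle+\|w_n\|}{\|u_n\|+\|v_n\|}\,\longrightarrow\,\left\langle\frac{v_\ast}{\|v_\ast\|},\hat w_\ast\right\rangle,
\]
so $\Lambda=\cos\theta$, where $\theta$ denotes the Euclidean angle between $v_\ast$ and $\hat w_\ast$. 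To conclude $\theta>0$, I exploit convexity of the $|\cdot|$-unit ball $B$: since $v_n\in\partial B$ and $u_n=v_n+w_n\in B$, any subgradient $p_n\in\partial|\cdot|(v_n)$ satisfies $\langle p_n,w_n\rangle\leq |u_n|-|v_n|\leq 0$. Subgradients of $|\cdot|$ being Euclidean-bounded, I extract a limit $p_n\to p\in\partial|\cdot|(v_\ast)$; the inequality $\langle p,\hat w_\ast\rangle\leq 0$ combined with $\langle p,v_\ast\rangle=|v_\ast|=1>0$ forces $\hat w_\ast\neq v_\ast/\|v_\ast\|$, whence $\cos\theta<1$ and $\Lambda<1$.

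The main obstacle is precisely this degenerate case: the $0/0$ form requires a hyperplane-support argument at $v_\ast$ rather than mere continuity, and because $|\cdot|$ may fail to be differentiable one must work with the set-valued subdifferential. The geometric content is that the forward direction $\hat w_\ast$ of a sequence of inward-pointing chords at a boundary point of the convex body $B$ cannot coincide with the outward radial direction $v_\ast/\|v_\ast\|$.
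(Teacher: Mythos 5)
Your proof is correct. The reduction to the vectorial inequality $\|u\|-\|v\|\le\lambda\|u-v\|$ under the constraint $|u|\le|v|$ is exactly the paper's starting point (translate $\gamma(t_1)$ to the origin and normalize), and the geometric mechanism you ultimately invoke --- convexity of the $|\cdot|$-unit ball prevents an inward-pointing chord at a boundary point from aligning with the outward radial direction --- is the same one the paper exploits. Where you genuinely diverge is in how this is converted into a uniform bound. The paper is quantitative throughout: the cone-avoidance claim $x\notin C_y$ yields the explicit cosine bound $\langle x-y,y\rangle\le\bigl(1-\tfrac{\delta^4}{2}\bigr)\|x-y\|\,\|y\|$, and the proof closes with the elementary one-variable estimate $\sqrt{1+2\rho t+t^2}-1\le\lambda t$ over the compact range $t=\|x-y\|/\|y\|\in[0,2/\delta]$; this produces a computable $\lambda$ in terms of the equivalence constant $\delta$ and treats the degenerate regime $t\to0$ and the nondegenerate one in a single stroke. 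You instead run a non-constructive extremal-sequence argument, which forces the case split between $u_\ast\neq v_\ast$ (strict Euclidean triangle inequality) and the $0/0$ case $u_\ast=v_\ast$, the latter resolved by the subdifferential/supporting-hyperplane argument at $v_\ast$. Your route avoids all computation and is conceptually clean, but it gives up the explicit dependence of $\lambda$ on the equivalence constant that the paper's argument provides; note also that your supporting-functional step is precisely the infinitesimal version of the paper's claim $B\cap C_y=\emptyset$, so the two proofs differ in technique (compactness plus subdifferentials versus an explicit enlarged cone and a calculus lemma) rather than in the underlying geometry.
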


\noindent\textit{Proof.} Since the norms $|\cdot|$ and $\Vert\cdot\Vert$ are
equivalent and since the properties of being self-expanded or being a
$\lambda$-curve are invariant by homothetic transformation, we may assume that
there exists $\delta>0$ such that for all $x\in\mathbb{R}^{d}$, $\delta\Vert
x\Vert\leq|x|\leq\Vert x\Vert$. Let $t_{0}<t_{1}<t_{2}$ in $I$ and set
$x_{0}=\gamma(t_{0})$, $x=\gamma(t_{1})$ and $y=\gamma(t_{2})$. It follows by
assumption that
\begin{equation}
|x-x_{0}|\leq|y-x_{0}|. \label{self}
\end{equation}
To establish the result it is sufficient to prove that there exists
$\lambda<1$ such that for all choices of $x_{0},x,y$ satisfying \eqref{self},
we have
\[
\Vert x-x_{0}\Vert\leq\Vert y-x_{0}\Vert+\lambda\Vert x-y\Vert.
\]
By translation, we may, and do assume, that $x_{0}=0$. Moreover, by
homogeneity, we can assume $|y|=1$. Set
\[
B=\{z\in\mathbb{R}^{d};\,|z|\leq\delta\}\quad\text{and\quad}C_{y}
=\{y+t(y-z);\,\Vert z\Vert<\delta,\,t>0\}.
\]
\smallskip

We claim that $B\cap C_{y}=\emptyset$. Indeed, fix $z$ such that $\Vert
z\Vert<\delta$. The function $\varphi:\mathbb{R}\to\mathbb{R}$ defined by
$\varphi(t)=\vert y+t(y-z)\vert$ is convex, $\varphi(-1)<1$ and $\varphi
(0)=1$, hence $\varphi(t)>1$ whenever $t>0$, that is, $y+t(y-z)\notin B$.
Since this is true for all $z$ satisfying $\Vert z\Vert<\delta$, the claim is
proved. \smallskip\noindent

Therefore Proposition~\ref{Prop-norm-equiv} is a consequence of the following lemma.

\begin{lemma}
There exists $\lambda<1$ such that, whenever $1\le\Vert y\Vert\le1/\delta$,
$\Vert x\Vert\le1/\delta$ and $x\notin C_{y}$, then $\Vert x\Vert-\Vert
y\Vert\le\lambda\Vert x-y\Vert$.
\end{lemma}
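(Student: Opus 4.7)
The approach is a compactness argument combined with the equality case of the (reverse) triangle inequality in the Euclidean norm. First I would verify that the set $\{(x,y): x\in C_{y}\}$ is open in $\mathbb{R}^{d}\times\mathbb{R}^{d}$: given a witness $x^{*}=y^{*}+t^{*}(y^{*}-z^{*})$ with $\Vert z^{*}\Vert<\delta$, the choice $z=y-(x-y)/t^{*}$ still satisfies $\Vert z\Vert<\delta$ for $(x,y)$ close to $(x^{*},y^{*})$, producing a witness for $(x,y)$ with the same $t=t^{*}$. Consequently
\[
K:=\{(x,y):1\leq\Vert y\Vert\leq 1/\delta,\ \Vert x\Vert\leq 1/\delta,\ x\notin C_{y}\}
\]
is a compact subset of $\mathbb{R}^{d}\times\mathbb{R}^{d}$. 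The case $x=y$ makes the desired inequality trivial, so it suffices to prove that the function $f(x,y):=(\Vert x\Vert-\Vert y\Vert)/\Vert x-y\Vert$ satisfies $\sup f<1$ on $K\cap\{x\neq y\}$, and then take any $\lambda\in(\sup f,1)\cap[0,1)$; negative values of $\Vert x\Vert-\Vert y\Vert$ are trivially dominated.

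I would argue by contradiction, taking $(x_{n},y_{n})\in K$ with $x_{n}\neq y_{n}$ and $f(x_{n},y_{n})\to 1$, and passing to a subsequence along which $x_{n}\to x^{*}$ and $y_{n}\to y^{*}$. In the non-degenerate case $x^{*}\neq y^{*}$, continuity of $f$ yields $\Vert x^{*}\Vert=\Vert y^{*}\Vert+\Vert x^{*}-y^{*}\Vert$; strict convexity of the Euclidean norm then forces $x^{*}-y^{*}=\sigma y^{*}$ for some $\sigma>0$, so $x^{*}=y^{*}+\sigma(y^{*}-0)$ with $0\in B(0,\delta)$, placing $x^{*}$ inside $C_{y^{*}}$. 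By the openness established above, $x_{n}\in C_{y_{n}}$ for large $n$, contradicting $(x_{n},y_{n})\in K$.

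The main obstacle is the degenerate case $x^{*}=y^{*}$, where $f$ is not continuous. Here I would extract a further subsequence so that the unit secants $u_{n}:=(x_{n}-y_{n})/\Vert x_{n}-y_{n}\Vert$ converge to some $u^{*}\in\mathbb{S}^{d-1}$. From the algebraic identity
\[
\Vert x_{n}\Vert^{2}-\Vert y_{n}\Vert^{2}=2\Vert x_{n}-y_{n}\Vert\,\langle u_{n},y_{n}\rangle+\Vert x_{n}-y_{n}\Vert^{2},
\]
dividing by $\Vert x_{n}-y_{n}\Vert(\Vert x_{n}\Vert+\Vert y_{n}\Vert)$ and letting $n\to\infty$ produces $\langle u^{*},y^{*}/\Vert y^{*}\Vert\rangle=1$, hence $u^{*}=y^{*}/\Vert y^{*}\Vert$ by the equality case of Cauchy--Schwarz. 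Evaluating the ray $s\mapsto y_{n}-s\,u_{n}$ at $s=\Vert y_{n}\Vert>0$ then yields a point converging to $y^{*}-\Vert y^{*}\Vert u^{*}=0\in B(0,\delta)$, so this ray meets $B(0,\delta)$ for all large $n$. This places $x_{n}$ inside $C_{y_{n}}$, contradicting $(x_{n},y_{n})\in K$ and completing the proof.
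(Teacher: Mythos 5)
Your proof is correct, and it takes a genuinely different route from the paper's. You run a soft compactness argument: you observe that $\{(x,y):x\in C_y\}$ is open (so the constraint set $K$ is compact), reduce to showing $\sup f<1$ for $f(x,y)=(\Vert x\Vert-\Vert y\Vert)/\Vert x-y\Vert$, and rule out $\sup f=1$ by examining limit points --- using the equality case of the triangle inequality when $x^*\neq y^*$, and a secant-direction analysis (correctly handling the discontinuity of $f$ on the diagonal) when $x^*=y^*$; in both cases the limiting configuration is shown to violate $x\notin C_y$, the diagonal case via the explicit witness $z_n=y_n-\Vert y_n\Vert u_n\to 0$. The paper instead argues quantitatively: from $x-y\notin\Gamma_y$ it extracts an explicit bound $\langle x-y,y\rangle\le\rho\,\Vert x-y\Vert\,\Vert y\Vert$ with $\rho=1-\delta^4/2$, and then bounds $\sqrt{1+2\rho t+t^2}-1\le\lambda t$ uniformly for $t=\Vert x-y\Vert/\Vert y\Vert$ in the compact range $[0,2/\delta]$. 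The trade-off is the usual one: the paper's computation produces (in principle) an explicit $\lambda$ as a function of the equivalence constant $\delta$, which is the dependence advertised in Proposition 2.2, whereas your argument is shorter on computation, makes the geometric mechanism (equality in the triangle inequality forces $x$ onto the forbidden ray through $y$ and $0\in B(0,\delta)$) more transparent, but is non-constructive. Both are complete proofs of the lemma.
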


\medskip\noindent\textit{Proof.} Set $u=x-y$ and $\Gamma_{y}
:=\{t(y-z);\,||z||<\delta,\,t>0\}$. We claim that there exists $\rho<1$ such
that, whenever $1\leq||y||\leq1/\delta$ and $u\in\mathbb{R}^{d}\diagdown
\Gamma_{y}$, then
\begin{equation}
\langle u,y\rangle\leq\rho\,||u||\cdot||y||. \label{cos}
\end{equation}

Indeed, since $\mathbb{R}^{d}\diagdown\Gamma_{y}$ is a cone, it is enough to
establish (\ref{cos}) when $\Vert u\Vert=\Vert y\Vert$. Let us denote by
$c(u,y)$ the cosine of the angle of the two vectors $u$ and $y$. The condition
$u\notin\Gamma_{y}$ yields $\Vert u-y\Vert\geq\delta$. Then we obtain $\Vert
u-y\Vert^{2}=\Vert y\Vert^{2}(2-2c(u,y))\geq1$, which yields
\[
c(u,y)\,\leq\,1-\frac{\delta^{2}}{2\Vert y\Vert^{2}}\,\leq\,1-\frac{\delta
^{4}}{2}:=\rho\,<\,1.
\]
This proves the claim. \medskip\noindent

Since $||y+u||^{2}\leqslant||y||^{2}+||u||^{2}+2||y||\,||u||\,\rho$, we deduce
from \eqref{cos} that
\[
||x||-||y||=||y+u||-||y||\,\leq\,||y||\left(  \sqrt{1+\frac{2\rho||u||}
{||y||}+\frac{||u||^{2}}{||y||^{2}}}-1\right)  .
\]
Since $||u||=||x-y||\leq||x||+||y||\leq2/\delta$ and $||y||\geq|y|=1$, we have
$t=\frac{||u||}{||y||}\in\lbrack0,2/\delta]$. Notice that taking $\lambda<1$
sufficiently close to 1, we ensure that for all $t\in\lbrack0,2/\delta]$ it
holds
\[
\sqrt{1+2\rho t+t^{2}}-1\leq\lambda t.
\]
Therefore we conclude that
\[
||x||-||y||\leq\lambda||u||=\lambda||x-y||.
\]
The proof is complete. \hfill$\square$

\bigskip

From now on we consider exclusively a Euclidean setting. An important feature
of the notion of $\lambda$-curve is the following property:

\begin{proposition}
[uniform non-collinearity]\label{prop-cola}Let $\gamma:I\rightarrow
{\mathbb{R}}^{d}$ be a $\lambda$-curve. Then, $\gamma$ is $\lambda$-uniformly
non-collinear, that is, for every $s,u,t\in I$ such that $s,u\leq t$ we have
\begin{equation}
\Big\langle\dfrac{\gamma(u)-\gamma(t)}{\Vert\gamma(u)-\gamma(t)\Vert}
,\dfrac{\gamma(s)-\gamma(t)}{\Vert\gamma(s)-\gamma(t)\Vert}
\Big\rangle\,>\,-\lambda\quad\left(  \,>\,-1\,\right)  \,. \label{cola}
\end{equation}

\end{proposition}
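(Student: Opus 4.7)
The plan is to apply the defining inequality \eqref{l-curve} to an appropriately ordered triple of parameters and translate the resulting metric inequality into a scalar-product inequality by squaring both sides.

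First, since the expression on the left-hand side of \eqref{cola} is symmetric in $s$ and $u$, I may assume without loss of generality that $s\le u\le t$. Setting
\[
A:=\gamma(s)-\gamma(t),\qquad B:=\gamma(u)-\gamma(t),
\]
so that $A-B=\gamma(s)-\gamma(u)$, the $\lambda$-curve condition applied to the ordered triple $t_{1}=s\le t_{2}=u\le t_{3}=t$ rewrites as $\|A-B\|\le\|A\|+\lambda\,\|B\|$. Both sides are nonnegative (the right one a fortiori, since it dominates the left one), hence squaring is legitimate. Expanding $\|A-B\|^{2}=\|A\|^{2}-2\langle A,B\rangle+\|B\|^{2}$, cancelling the $\|A\|^{2}$ terms and collecting, one arrives at
\[
(1-\lambda^{2})\,\|B\|^{2}\ \le\ 2\|A\|\,\|B\|\left(\frac{\langle A,B\rangle}{\|A\|\,\|B\|}+\lambda\right).
\]

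Every $\lambda$-curve is injective (noted just after Definition~\ref{def_l-curve}), so $A,B\ne 0$ whenever $s$, $u$ and $t$ are distinct, and dividing by $2\|A\|\,\|B\|>0$ is allowed. Since $\lambda\in[-1,1)$ and the factor $1-\lambda^{2}$ is strictly positive as soon as $\lambda>-1$, the bound above upgrades at once to the strict inequality $\langle A,B\rangle/(\|A\|\,\|B\|)>-\lambda$, which is exactly \eqref{cola}; the announced bound $>-1$ then follows a fortiori because $-\lambda>-1$. (The extreme case $\lambda=-1$ forces $\gamma$ to parametrize a straight segment, where the statement degenerates and is therefore discarded.)

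I do not expect a genuine obstacle: the entire argument is essentially a squaring trick. The only point worth double-checking is that the slack $(1-\lambda^{2})\|B\|^{2}$ appearing on the left is strictly positive, since this is precisely the quantitative excess which converts the weak bound $\ge-\lambda$ (that one could obtain from Cauchy--Schwarz alone) into the strict bound $>-\lambda$ asserted by the proposition.
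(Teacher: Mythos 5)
Your proof is correct and follows essentially the same route as the paper: apply the defining inequality \eqref{l-curve} to the ordered triple with $t$ last, square, and read off the strict bound from the positive slack $(1-\lambda^{2})\Vert B\Vert^{2}$ (the paper phrases the same computation via the law of cosines rather than vector expansion). Your explicit handling of injectivity and of the degenerate case $\lambda=-1$ is a minor added care, not a different argument.
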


\noindent\textit{Proof.} Assume that $u<s<t$. Because $\gamma$ is $\lambda
$-curve we have that
\[
d(\gamma(u),\gamma(s))\,\leq\,d(\gamma(u),\gamma(t))\,+\,\lambda
\,d(\gamma(s),\gamma(t))
\]
Consider the triangle of vertices $\gamma(t)$, $\gamma(u)$ and $\gamma(s)$ and
set $c=d(\gamma(u),\gamma(s))$, $a=d(\gamma(u),\gamma(t))$ and $b=d(\gamma
(s),\gamma(t))$. The previous equation now reads $c\leq a+\lambda b$, and
after squaring both sides we get%
\begin{equation}
c^{2}\leq a^{2}+\lambda^{2}b^{2}+2\lambda ab. \label{eqsquare}
\end{equation}
Evoking the law of cosine $c^{2}=a^{2}+b^{2}-2ab\cos\varphi$ we deduce
\[
\cos\varphi=\frac{a^{2}+b^{2}-c^{2}}{2ab}\,\overset{\eqref{eqsquare}}{\geq
}\,\frac{(1-\lambda^{2})b^{2}-2\lambda ab}{2ab}\,>\,-\lambda,
\]
that is, the angle $\varphi$ between the vectors
\[
\dfrac{\gamma(u)-\gamma(t)}{\Vert\gamma(u)-\gamma(t)\Vert}\quad\text{and}
\quad\dfrac{\gamma(s)-\gamma(t)}{\Vert\gamma(s)-\gamma(t)\Vert}
\]
is strictly less than $\pi-\alpha$ ($\alpha=\arccos(\lambda)$ is given by
\eqref{a(l)}).\hfill$\square$

\bigskip

Before we proceed, we give the following definition.

\begin{definition}
[$\lambda$-cone property]\label{def_l-cone-prop} Let $\lambda\in\lbrack-1,1)$
and $\alpha=\arccos(\lambda)$. We say that a continuous curve $\gamma
:I\rightarrow{\mathbb{R}}^{d}$ satisfies the $\lambda$-cone property if for
every $t\in I$ and for every $q_{t}^{+}\in\mathrm{sec}^{+}(t)$ it holds
\begin{equation}
\left\langle q_{t}^{+},\frac{\gamma(u)-\gamma(t)}{\Vert\gamma(u)-\gamma
(t)\Vert}\right\rangle \leq\lambda,\quad\text{for all }u<t. \label{sec-eel}
\end{equation}

\end{definition}

In other words, recalling \eqref{open-cone}, the set $\Gamma(t)-\gamma(t)$
does not intersect the cone $C\left(  q_{t}^{+},\alpha\right)  $ directed by
$q_{t}^{+}$ and of aperture $2\alpha$ expect at $0$, that is, for every $t\in
I$
\begin{equation}
\left(  {\gamma(t)\,+\bigcup\limits_{q_{t}^{+}\in\mathrm{sec}^{+}(t)}}C\left(
q_{t}^{+},\alpha\right)  \right)  \,\,\bigcap\,\;\Gamma(t)\,=\,\left\{
\gamma(t)\right\}  . \label{l-sec-snake}
\end{equation}

We shall now consider a second important feature of the class of (continuous)
$\lambda$-curves.

\begin{proposition}
[$\lambda$-curve $\Longrightarrow$ $\lambda$-cone property]
\label{prop-implies}Every continuous $\lambda$-curve has the $\lambda$-cone property.
\end{proposition}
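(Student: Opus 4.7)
The plan is to apply the defining inequality \eqref{l-curve} with the three parameters $t_1=u<t_2=t<t_3=t_k$, where $\{t_k\}_{k}\searrow t^{+}$ is a sequence realizing a given forward secant $q_{t}^{+}\in \mathrm{sec}^{+}(t)$, and then pass to the limit as $k\to\infty$. Because $\gamma$ is a $\lambda$-curve it is injective, so the denominators appearing in the normalized secants do not vanish: $\gamma(u)\neq\gamma(t)$ and $\gamma(t_{k})\neq\gamma(t)$ for every $k$. Continuity of $\gamma$ moreover gives $\gamma(t_{k})\to\gamma(t)$.

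Writing $a:=\gamma(u)-\gamma(t)$ and $b_{k}:=\gamma(t_{k})-\gamma(t)$, the $\lambda$-curve inequality becomes
\[
\|a\|\,\leq\,\|a-b_{k}\|\,+\,\lambda\,\|b_{k}\|,
\]
equivalently $\|a\|-\|a-b_{k}\|\leq\lambda\|b_{k}\|$. The next step is the standard conjugate trick:
\[
\|a\|-\|a-b_{k}\|\,=\,\frac{\|a\|^{2}-\|a-b_{k}\|^{2}}{\|a\|+\|a-b_{k}\|}\,=\,\frac{2\langle a,b_{k}\rangle-\|b_{k}\|^{2}}{\|a\|+\|a-b_{k}\|}.
\]
Dividing the resulting inequality by $\|b_{k}\|>0$ yields
\[
\frac{2\bigl\langle a,\,b_{k}/\|b_{k}\|\bigr\rangle-\|b_{k}\|}{\|a\|+\|a-b_{k}\|}\,\leq\,\lambda.
\]

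Letting $k\to\infty$, the continuity of $\gamma$ gives $b_{k}\to 0$ and $\|a-b_{k}\|\to\|a\|$, while by the choice of $\{t_{k}\}_{k}$ the normalized vectors $b_{k}/\|b_{k}\|$ converge to $q_{t}^{+}$. The left-hand side therefore tends to $\langle a,q_{t}^{+}\rangle/\|a\|$, and we obtain
\[
\left\langle \frac{\gamma(u)-\gamma(t)}{\|\gamma(u)-\gamma(t)\|},\,q_{t}^{+}\right\rangle \,\leq\,\lambda,
\]
which is exactly \eqref{sec-eel}. Since $t\in I$, $u<t$ and $q_{t}^{+}\in\mathrm{sec}^{+}(t)$ were arbitrary, this establishes the $\lambda$-cone property.

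There is no genuine obstacle here: the only point requiring a moment of care is verifying that the denominators are nonzero, which follows from the injectivity of $\lambda$-curves (an immediate consequence of Definition~\ref{def_l-curve} for $\lambda<1$). In particular the argument uses continuity of $\gamma$ only to ensure $\gamma(t_{k})\to\gamma(t)$, so that $\|a-b_{k}\|\to\|a\|$; the algebraic inequality itself holds without any regularity assumption.
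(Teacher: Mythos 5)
Your proof is correct. The overall strategy coincides with the paper's: apply the defining inequality \eqref{l-curve} to the triple $u<t<t_k$ along a sequence $\{t_k\}_k\searrow t^+$ realizing the forward secant $q_t^+$, and pass to the limit. Where you differ is in the single technical step of computing the limit of $\bigl(\|a\|-\|a-b_k\|\bigr)/\|b_k\|$: the paper sets $\Phi(X)=\|X\|$ and invokes the Mean Value Theorem for the norm, which is differentiable along the segment $[X,X_k]$ away from the origin, whereas you use the purely algebraic identity $\|a\|-\|a-b_k\|=\bigl(2\langle a,b_k\rangle-\|b_k\|^2\bigr)/\bigl(\|a\|+\|a-b_k\|\bigr)$. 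Your version is more elementary and needs no differentiability, though it is tied to the inner-product structure via polarization; the paper's MVT argument would survive verbatim for any norm that is $C^1$ off the origin. Your remarks on the non-vanishing of the denominators and on the precise role of continuity (only to guarantee $b_k\to 0$) are accurate and, if anything, slightly more careful than the paper's exposition.
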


\noindent\textit{Proof.} Fix $t\in I$, let $u<t$, $q_{t}^{+}\in\mathrm{sec}
^{+}(t)$ and choose $\{t_{k}\}_{k}\searrow t$ such that
\[
\frac{\gamma(t_{k})-\gamma(t)}{\Vert\gamma(t_{k})-\gamma(t)\Vert
}\,\longrightarrow\,q_{t}^{+}.
\]
Since $\gamma$ is a $\lambda$-curve we have
\[
\Vert\gamma(t)-\gamma(u)\Vert\leq\Vert\gamma(t_{k})-\gamma(u)\Vert
+\lambda\Vert\gamma(t_{k})-\gamma(t)\Vert,
\]
yielding
\[
\frac{\Vert\gamma(t_{k})-\gamma(u)\Vert-\Vert\gamma(t)-\gamma(u)\Vert
}{||\gamma(t_{k})-\gamma(t)||}\,\geq\,-\lambda
\]
Set $\Phi(X)=||X||,$ $X_{k}=\gamma(t_{k})-\gamma(u)$ and $X=\gamma
(t)-\gamma(u).$ Then the above inequality reads
\[
\frac{\Phi(X_{k})-\Phi(X)}{||X_{k}-X||}\,\geq\,-\lambda.
\]
Since the norm is differentiable around the segment $[X,X_{k}]:=\{tX+(1-t)X_k:\,t\in [0,1]\}$, applying the
Mean Value theorem we obtain $\theta_{k}\in\lbrack0,1)$ such that
\[
\Phi(X_{k})-\Phi(X)=D\Phi(X+\theta_{k}(X_{k}-X))(X_{k}-X)=\left\langle
\frac{X+\theta_{k}(X_{k}-X)}{\Vert X+\theta_{k}(X_{k}-X)\Vert},X_{k}
-X\right\rangle .
\]
Combining the above formulas and taking the limit as $k\rightarrow\infty$ we
get
\[
\left\langle \frac{\gamma(t)-\gamma(u)}{\Vert\gamma(t)-\gamma(u)\Vert}
,\,q_{t}^{+}\right\rangle \,\geqslant\,-\lambda.
\]
The above is equivalent to (\ref{sec-eel}) and the proof is complete.\hfill
$\square$

\bigskip

The following example reveals that there exist $C^{1}$curves satisfying the
$\lambda$-cone property but failing to satisfy the non-collinearity property.
Therefore these curves cannot be $\lambda$-curves for any value of the
parameter $\lambda\in\lbrack-1,1)$.

\begin{example}
\label{ex3dcurve} Let $\gamma:[-3\pi/2,1+\pi]\rightarrow{\mathbb{R}}^{3}$ be
defined by
\[
\gamma(t)\, =\left\{
\begin{array}
[c]{ll}%
(0,-\sin t,-\cos t), & \text{if }t\in\lbrack-3\pi/2,-\pi/2], \smallskip\\
(-\frac{1}{2}(1+\cos2t),1,\frac{1}{2}\sin2t), & \text{if }t\in\lbrack
-\pi/2,0], \smallskip\\
(-1,1,t), & \text{if }t\in\lbrack0,1],\smallskip\\
(-1,\frac{1}{2}(1+\cos2(t-1)),1+\frac{1}{2}\sin2(t-1)), & \text{if }
t\in\lbrack1,1+\pi/2],\smallskip\\
(-\sin(t-1),0,1+\cos(t-1)), & \text{if }t\in\lbrack1+\pi/2,1+\pi].
\end{array}
\right.
\]
It is easy to check that $\gamma$ is $C^{1}$-smooth. Moreover, $\gamma$ fails
to satisfy the non-collinearity property: indeed, $\gamma(1+\pi)=(0,0,0)$ is
the midpoint of the segment $[\gamma(-3\pi/2),\gamma(-\pi/2)]$. Hence, by
Proposition~\ref{prop-cola}, $\gamma$ cannot be a $\lambda$-curve for any
value of the parameter $\lambda<1$. On the other hand, any tangent line
\[
\{\gamma(t)+s\gamma^{\prime}(t);\,s\in\mathbb{R} \}
\]
meets the curve $\{\gamma(\tau);\,\tau\in[-3\pi/2,1+\pi]\}$ only at the point
$\gamma(t)$. Therefore, by a simple compactness argument, there exists
$\lambda_{0}<1$ for which $\gamma$ satisfies the $\lambda_{0}$-cone property.

\begin{figure}[h!]
\includegraphics[width=0.6\textwidth]{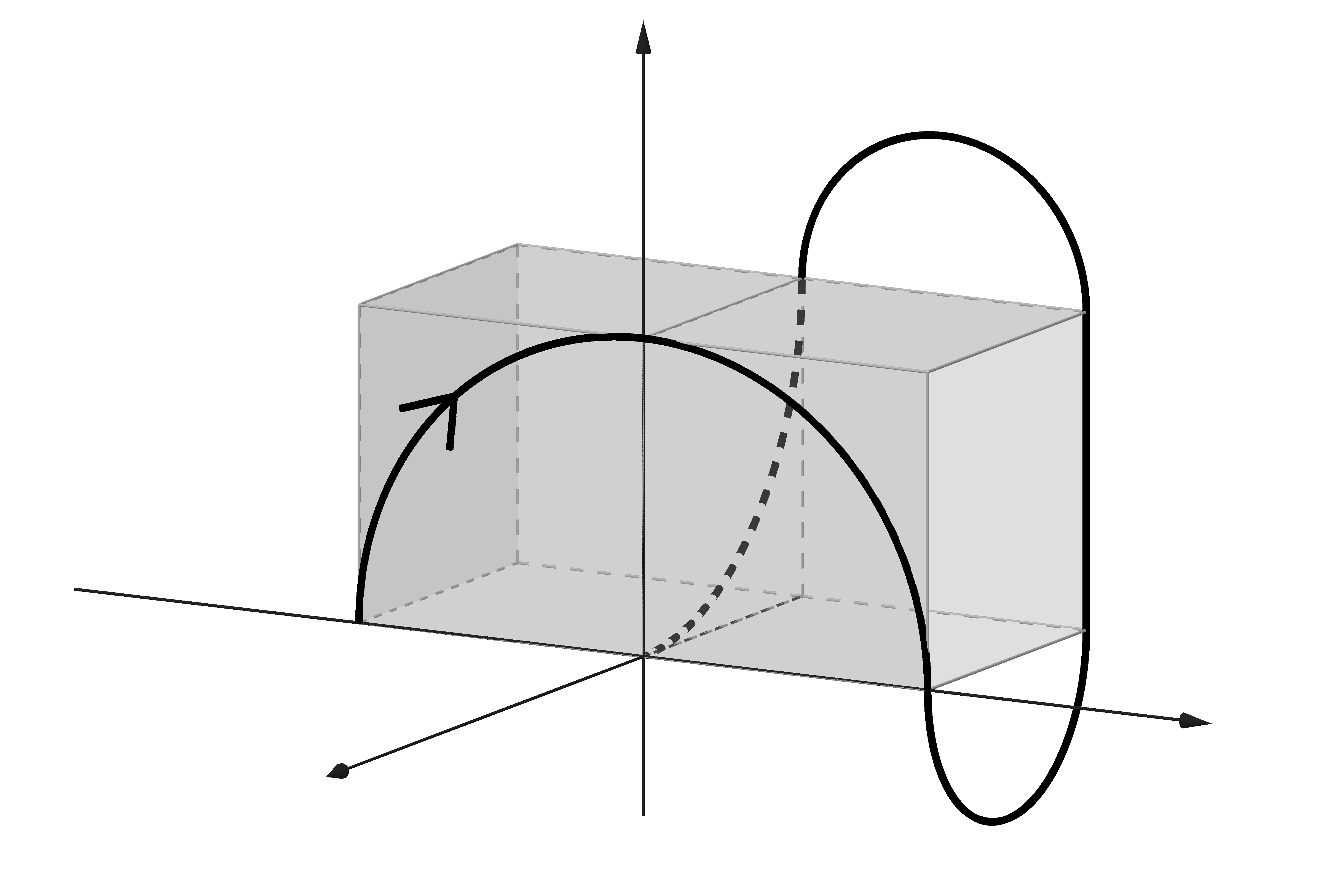}
\caption{Example of a curve with the $\lambda_0$-cone property, failing to be $\lambda$-curve for any $\lambda<1$. }
\label{circletangent}
\end{figure}
\end{example}

\section{Length of $\lambda$-curves}

Before we proceed we recall from \cite{DDDL} the following result (we provide
a proof for completeness).

\begin{lemma}
\label{Lem-2.5} Let $\Sigma\subset\mathbb{S}^{d-1}$ $($the unit sphere of
$\mathbb{R}^{d}$, $d>1)$ and assume that for $\lambda<1/d$ it holds
\[
\langle x,x^{\prime}\rangle\geq-\lambda,\quad\text{for all }x,x^{\prime}
\in\Sigma.
\]
Then $\Sigma$ is contained in a half-sphere (therefore it generates a closed
convex pointed cone).
\end{lemma}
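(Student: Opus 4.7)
I would argue by contradiction. The statement ``$\Sigma$ is contained in a half-sphere'' is equivalent to the existence of some $v \in \mathbb{S}^{d-1}$ with $\langle x, v \rangle \ge 0$ for every $x \in \Sigma$. A supporting--hyperplane argument in $\mathbb{R}^d$ shows that this is in turn equivalent to $0 \notin \mathrm{int}(\mathrm{conv}(\Sigma))$: either the origin lies outside $\overline{\mathrm{conv}}(\Sigma)$, in which case strict separation yields such a $v$, or it lies on the boundary of $\mathrm{conv}(\Sigma)$ and a supporting hyperplane through $0$ provides one. So I would assume, for contradiction, that $0 \in \mathrm{int}(\mathrm{conv}(\Sigma))$.

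\textbf{Extracting a finite combination and squaring.} Under this assumption $0 \in \mathrm{conv}(\Sigma)$, and Carath\'eodory's theorem in $\mathbb{R}^d$ furnishes $x_1, \dots, x_n \in \Sigma$ and positive coefficients $\alpha_1, \dots, \alpha_n$ (after discarding any zero weights) with $n \le d+1$, $\sum_{i=1}^n \alpha_i = 1$, and $\sum_{i=1}^n \alpha_i x_i = 0$. Taking the squared Euclidean norm of this identity and using $\|x_i\|=1$, the hypothesis $\langle x_i, x_j \rangle \ge -\lambda$ for $i \ne j$, and the identity $\sum_{i \ne j} \alpha_i \alpha_j = 1 - \sum_i \alpha_i^2$, I would obtain
\[
0 \;=\; \sum_i \alpha_i^2 + \sum_{i \ne j} \alpha_i \alpha_j \langle x_i, x_j \rangle \;\ge\; (1+\lambda) \sum_i \alpha_i^2 - \lambda,
\]
whence $\sum_i \alpha_i^2 \le \lambda/(1+\lambda)$. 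The Cauchy--Schwarz bound $\sum_i \alpha_i^2 \ge 1/n \ge 1/(d+1)$ then forces $\lambda \ge 1/d$, contradicting the hypothesis $\lambda < 1/d$. (The case $\lambda \le 0$ is trivial: any $x_0 \in \Sigma$ serves as a pole of a containing half-sphere, since $\langle x, x_0 \rangle \ge -\lambda \ge 0$ for every $x \in \Sigma$.)

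\textbf{Where the difficulty lies.} The quadratic computation itself is essentially a one-line expansion; the substantive ingredient is the Carath\'eodory cap $n \le d+1$, which is exactly what makes the elementary Cauchy--Schwarz bound $\sum_i \alpha_i^2 \ge 1/n$ tight enough to recover the threshold $1/d$. The sharpness of this threshold is witnessed by the $d+1$ vertices of a regular simplex inscribed in $\mathbb{S}^{d-1}$, whose pairwise inner products equal $-1/d$ and whose convex hull contains the origin in its interior---so there is no room to relax the hypothesis.
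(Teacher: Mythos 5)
Your proof is correct and follows essentially the same route as the paper: reduce the claim to showing $0\notin\mathrm{conv}(\Sigma)$, apply Carath\'eodory to a putative convex combination $\sum_i\alpha_i x_i=0$ with at most $d+1$ terms, and derive the contradiction $\lambda\ge 1/d$. The only (minor) difference lies in the final estimate: the paper pairs the identity $\sum_i\alpha_i x_i=0$ with each $x_j$ and sums the resulting linear inequalities, whereas you expand $\Vert\sum_i\alpha_i x_i\Vert^2$ and invoke Cauchy--Schwarz via $\sum_i\alpha_i^2\ge 1/(d+1)$; both computations rest on the same Carath\'eodory bound and give the same threshold.
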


\noindent\textit{Proof.} Notice that the conclusion holds if and only if
$0\notin\mathrm{conv\,}(\Sigma)$. Let us assume that $0\in\mathrm{conv\,}
(\Sigma).$ Then by Caratheodory theorem, there exist $\alpha_{0},\alpha
_{1},\cdots,\alpha_{d}\geq0$ and $x_{0},x_{1},\cdots,x_{d}\in\mathbb{S}^{d-1}$
such that
\[
\sum_{i=0}^{d}\alpha_{i}=1\quad\text{and}\quad\sum_{i=0}^{d}\alpha_{i}
x_{i}=\mathbf{0}.
\]
It follows that for $j\in\{0,1,\ldots,d\}$,
\[
0=\langle\mathbf{0},x_{j}\rangle=\sum_{i=0}^{d}\alpha_{i}\langle x_{i}
,x_{j}\rangle\geq\alpha_{j}-\lambda\sum_{i\neq j}\alpha_{i}=\alpha_{j}
-\lambda(1-\alpha_{j}).
\]
Summing up for all $j\in\{0,1,\ldots,d\}$ we get $0\geq1-\lambda(d+1-1),$
which contradicts the assumption $\lambda<1/d$.\hfill$\square$

\bigskip

Recalling the notation of \eqref{aperture}, \eqref{KG} and \eqref{a(l)}, and
assuming $\lambda<1/d$ we obtain the following result (as a straightforward
combination of Lemma~\ref{Lem-2.5} with Proposition~\ref{prop-cola}).

\begin{corollary}
[conical control of the initial part]\label{Cor-implies-}Let $-1\leq
\lambda<1/d$ and $\alpha=\arccos(\lambda).$ Then for every $t\in I,$ the
initial part $\Gamma(t)$ of a $\lambda$-curve $\gamma$ is contained in a
closed convex cone $K(t)$ of aperture at most $\pi-\alpha$ centered at $\gamma(t)$.
In other words,
\begin{equation}
\Gamma(t)\subset\gamma(t)+K(t)\quad\text{and\quad}\mathcal{A}(K(t))\leq
\pi-\alpha. \label{A(K(t))}
\end{equation}

\end{corollary}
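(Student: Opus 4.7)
The inclusion $\Gamma(t) \subset \gamma(t) + K(t)$ is built into the very definition of $K(t)$ in \eqref{K(t)}, so the entire content of the corollary is the aperture bound $\mathcal{A}(K(t)) \leq \pi - \alpha$. Unpacking the definition of $\mathcal{A}$, this reduces to showing the inequality $\langle v, w \rangle \geq -\lambda$ for every pair $v, w \in K(t) \cap \mathbb{S}^{d-1}$.

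I would start by introducing the set of normalized backward directions
\[
\Sigma(t) := \Bigl\{ \tfrac{\gamma(u)-\gamma(t)}{\|\gamma(u)-\gamma(t)\|} : u \in I,\ u < t,\ \gamma(u) \ne \gamma(t) \Bigr\} \subset \mathbb{S}^{d-1},
\]
so that by \eqref{K(t)} we have $K(t) = \overline{\mathrm{cone}}\,(\Sigma(t))$. Proposition~\ref{prop-cola} immediately yields $\langle x, x' \rangle > -\lambda$ for all $x, x' \in \Sigma(t)$, in particular $A(\Sigma(t)) \geq -\lambda$. Since $\lambda < 1/d$, Lemma~\ref{Lem-2.5} applies to $\Sigma(t)$ and asserts that $\Sigma(t)$ lies in a closed half-sphere; equivalently, $K(t)$ is a closed convex \emph{pointed} cone, so its aperture is well defined.

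The remaining step is to lift the inner-product bound from $\Sigma(t)$ to every pair of unit vectors in $K(t)$. For this I would invoke the standard fact that the spherical diameter of a spherically convex subset of $\mathbb{S}^{d-1}$ of diameter less than $\pi$ is attained between two of its extreme points, and combine it with the observation that every extreme ray of the pointed cone $K(t) = \overline{\mathrm{cone}}(\Sigma(t))$ lies in $\overline{\Sigma(t)}$. Continuity of the inner product gives $A(\overline{\Sigma(t)}) = A(\Sigma(t)) \geq -\lambda$, whence $A(K(t)\cap \mathbb{S}^{d-1}) \geq -\lambda$ and therefore $\mathcal{A}(K(t)) \leq \arccos(-\lambda) = \pi - \alpha$. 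The only real obstacle is this spherical extreme-point reduction; since the authors label the result a "straightforward combination" of Lemma~\ref{Lem-2.5} and Proposition~\ref{prop-cola}, they presumably intend the aperture bound to be read off directly from the pairwise bound on the generating set $\Sigma(t)$, but the argument sketched above fills the gap if one insists on the literal definition of $\mathcal{A}$ via the infimum over all unit vectors in the cone.
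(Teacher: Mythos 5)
Your reduction is faithful to what the paper intends: the inclusion is definitional, Proposition~\ref{prop-cola} gives the pairwise bound $\langle x,x'\rangle>-\lambda$ on the set $\Sigma(t)$ of backward secant directions, and Lemma~\ref{Lem-2.5} gives pointedness of $K(t)$ (the paper offers nothing beyond calling this a ``straightforward combination''). The genuine gap is the bridge you build from $A(\Sigma(t))\geq-\lambda$ to $A\bigl(K(t)\cap\mathbb{S}^{d-1}\bigr)\geq-\lambda$. The ``standard fact'' you invoke --- that the spherical diameter of a spherically convex subset of $\mathbb{S}^{d-1}$ of diameter less than $\pi$ is attained between two extreme points --- is false once the diameter exceeds $\pi/2$, which is exactly the regime $\lambda\in(0,1/d)$ at stake here. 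Concretely, take $d=3$, $\lambda=3/10<1/3$, and $x_i=(\sin\theta\cos\tfrac{2\pi i}{3},\,\sin\theta\sin\tfrac{2\pi i}{3},\,\cos\theta)$ for $i=1,2,3$ with $\sin^2\theta=4/5$ and $\cos\theta>0$. Then $\langle x_i,x_j\rangle=1-\tfrac32\sin^2\theta=-\tfrac15>-\lambda$ for $i\neq j$ and the three vectors lie in an open half-sphere, yet the cone they generate contains the unit vectors $u=(x_1+x_2)/\Vert x_1+x_2\Vert$ and $x_3$ with $\langle u,x_3\rangle=(2-3\sin^2\theta)/\sqrt{4-3\sin^2\theta}=-1/\sqrt{10}\approx-0.316<-\lambda$. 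So the spherical diameter of this (spherically convex, diameter $\approx108^\circ<\pi$) triangle is not attained at a pair of vertices, and, more to the point, the implication you are trying to establish --- pairwise bound on a generating set implies the same bound on all unit vectors of the generated cone --- is simply false for $\lambda>0$.

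For $\lambda\leq0$ your conclusion does hold, but by a one-line bilinearity argument rather than by extreme points: $\langle\sum_i\mu_ix_i,\sum_j\nu_jx_j\rangle=\sum_{i,j}\mu_i\nu_j\langle x_i,x_j\rangle\geq-\lambda\sum_i\mu_i\sum_j\nu_j\geq-\lambda\,\Vert\sum_i\mu_ix_i\Vert\,\Vert\sum_j\nu_jx_j\Vert$, the last step using $-\lambda\geq0$ and the triangle inequality. For $\lambda\in(0,1/d)$ the aperture bound on the full cone, with $\mathcal{A}$ as literally defined via \eqref{aperture}, cannot be extracted from Proposition~\ref{prop-cola} and Lemma~\ref{Lem-2.5} alone: either the corollary must be read as the statement $A(\Sigma(t))\geq-\lambda$ about the secant directions themselves (which is all that Proposition~\ref{prop-cola} delivers), or additional structure of the curve has to be brought in. This is not cosmetic, because the stronger form is what gets used later: in the proof of Proposition~\ref{prop-gap} the contradiction comes from a pair $\tilde q,-\hat q$ of which only $-\hat q$ is a secant direction. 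You located the right obstacle, but the lemma you propose to clear it with is false, so the proof as written does not go through.
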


To sum up, given a continuous $\lambda$-curve $\gamma$,
Proposition~\ref{prop-implies} ensures that its initial part $\Gamma(t)$
avoids the union of all cones centered at $\gamma(t)$ and directed by forward
secants of $\gamma$ at $t$, see \eqref{l-sec-snake}, while
Corollary~\ref{Cor-implies-} asserts that, provided $\lambda<1/d$, the initial
part of the curve $\Gamma(t)$ is itself contained in the closed convex pointed
cone $\gamma(t)+K(t)$, centered at $\gamma(t)$. The following proposition
asserts that an even stronger property is satisfied.

\begin{proposition}
[conical split at each $t$]\label{prop-gap} Let $\gamma:I\rightarrow
{\mathbb{R}}^{d}$ be a continuous $\lambda$-curve, with $\lambda\in
\lbrack-1,1/d)$ and $\alpha=\arccos(\lambda).$ Then it holds:
\begin{equation}\label{todo}
\left(  {\displaystyle\bigcup\limits_{q_{t}^{+}\in\mathrm{sec}^{+}(t)}
}C\left(  q_{t}^{+},\alpha\right)  \right)  \,\,\bigcap
\,\,\,K(t)\,\,=\,\,\{0\}, \qquad \text{for all\, } \,t\in I.
\end{equation}

\end{proposition}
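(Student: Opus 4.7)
The plan is to reduce the claim to the key inequality: for every $t\in I$, every $w\in K(t)\setminus\{0\}$, and every $q_{t}^{+}\in\mathrm{sec}^{+}(t)$,
\[
\langle w,q_{t}^{+}\rangle\,\le\,\lambda\,\|w\|,
\]
which is exactly the negation of $w\in C(q_{t}^{+},\alpha)$. Proposition~\ref{prop-implies} already gives this bound on the generators $w=\gamma(s)-\gamma(t)$, $s\le t$, of the cone $K(t)$, but the property is \emph{not} preserved under non-negative linear combinations: summing the generator inequalities produces only $\langle v,q_{t}^{+}\rangle\le\lambda\sum_{i}\beta_{i}\|\gamma(s_{i})-\gamma(t)\|$, whose right-hand side is, by the triangle inequality, larger than $\lambda\|v\|$. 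So the $\lambda$-cone property alone is too weak, and Corollary~\ref{Cor-implies-} has to be invoked at a different time. The natural choice is a nearby instant $t_{k}\searrow t$ realizing $q_{t}^{+}$ as a forward secant.

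Concretely, I would fix $q_{t}^{+}\in\mathrm{sec}^{+}(t)$, pick $t_{k}\searrow t$ with $(\gamma(t_{k})-\gamma(t))/\|\gamma(t_{k})-\gamma(t)\|\to q_{t}^{+}$, and observe that finite sums $v=\sum_{i=1}^{m}\beta_{i}(\gamma(s_{i})-\gamma(t))$ with $\beta_{i}\ge 0$ and $s_{i}\le t$ are dense in $K(t)$. For such a $v$, set $B:=\sum_{i}\beta_{i}$ and define
\[
v_{k}\,:=\,\sum_{i=1}^{m}\beta_{i}\bigl(\gamma(s_{i})-\gamma(t_{k})\bigr)\,=\,v+B\bigl(\gamma(t)-\gamma(t_{k})\bigr).
\]
Because $s_{i}\le t<t_{k}$, each term $\gamma(s_{i})-\gamma(t_{k})$ lies in $\Gamma(t_{k})-\gamma(t_{k})\subset K(t_{k})$, so $v_{k}\in K(t_{k})$; similarly $\gamma(t)-\gamma(t_{k})\in K(t_{k})$. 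Applying Corollary~\ref{Cor-implies-} at the instant $t_{k}$, the cone $K(t_{k})$ is closed, convex, pointed and of aperture at most $\pi-\alpha$; hence, for all nonzero $u_{1},u_{2}\in K(t_{k})$, one has $\langle u_{1}/\|u_{1}\|,u_{2}/\|u_{2}\|\rangle\ge\cos(\pi-\alpha)=-\lambda$. Choosing $u_{1}=v_{k}$ and $u_{2}=\gamma(t)-\gamma(t_{k})$ (both nonzero for $k$ large, since $v_{k}\to v\ne 0$ and $\gamma$ is injective) yields
\[
\left\langle\frac{v_{k}}{\|v_{k}\|},\,\frac{\gamma(t)-\gamma(t_{k})}{\|\gamma(t)-\gamma(t_{k})\|}\right\rangle\,\ge\,-\lambda.
\]

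To finish, I would let $k\to\infty$: continuity of $\gamma$ forces $\gamma(t_{k})\to\gamma(t)$, hence $v_{k}\to v$ and $v_{k}/\|v_{k}\|\to v/\|v\|$, while the secant choice makes $(\gamma(t)-\gamma(t_{k}))/\|\gamma(t)-\gamma(t_{k})\|\to-q_{t}^{+}$. The displayed inequality becomes $\langle v/\|v\|,-q_{t}^{+}\rangle\ge-\lambda$, i.e.\ $\langle v,q_{t}^{+}\rangle\le\lambda\|v\|$; a routine density argument extends this bound to all $w\in K(t)\setminus\{0\}$, which is~\eqref{todo}. The main obstacle is precisely the one highlighted in the first paragraph: convex combinations destroy the pointwise $\lambda$-cone property, so the only way to keep the aperture control in play is to shift the apex of the cone from $\gamma(t)$ to $\gamma(t_{k})$; this places the deformed vector $v_{k}$ and the limiting direction $-q_{t}^{+}$ into a common cone $K(t_{k})$ of controlled aperture, after which the continuity of $\gamma$ carries the conclusion back to time~$t$.
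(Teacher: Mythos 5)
Your proof is correct and is essentially the paper's argument: both shift the apex from $\gamma(t)$ to a nearby later instant (your $t_k$, the paper's $s$), observe that the transported combination and the direction $\gamma(t)-\gamma(t_k)$ both lie in $K(t_k)$, invoke the aperture bound $\mathcal{A}(K(t_k))\le\pi-\alpha$ from Corollary~\ref{Cor-implies-}, and pass to the limit so that $(\gamma(t)-\gamma(t_k))/\|\gamma(t)-\gamma(t_k)\|\to -q_t^{+}$. The only difference is presentational: the paper argues by contradiction from a single interior point $q\in C(q_t^+,\alpha)\cap K(t)$ written via Carath\'eodory, whereas you prove the inequality $\langle w,q_t^{+}\rangle\le\lambda\|w\|$ directly on a dense set of finite conic combinations and close up by continuity.
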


\noindent\textit{Proof.} Assume towards a contradiction that for some
$q_{t}^{+}\in\mathrm{\sec}^{+}(t)$ there exists $q\in C\left(  q_{t}^{+},\alpha\right)  \cap K(t)$, $q\neq0$. 
This yields, in view of Proposition~\ref{prop-implies}, that $\mathrm{int\,}K(t)$ is nonempty. Therefore, since $q$ satisfies the open
condition
\[
\langle q_{t}^{+},q\rangle\,>\,\lambda=\cos\alpha,
\]
there is no loss of generality to assume that $q\in\mathrm{int\,}K(t)$.
Therefore, there exist $t_{1}<t_{2}<\ldots<t_{d}<t$ and $\{\mu_{i}\}_{i=1}
^{d}\subset\mathbb{R}_{+}$ such that
\[
u_{i}:=\frac{\gamma(t_{i})-\gamma(t)}{\Vert\gamma(t_{i})-\gamma(t)\Vert}
\quad\text{and}\quad q=\sum\limits_{i=1}^{d}\mu_{i}\,u_{i}.
\]
Fix $\varepsilon>0$ such that $\langle q_{t}^{+},q\rangle>\lambda
+3\varepsilon.$ By continuity, there exists $\delta>0$ such that for all
$s\in(t,t+\delta)$ the vectors
\[
\tilde{u}_{i}:=\frac{\gamma(t_{i})-\gamma(s)}{\Vert\gamma(t_{i})-\gamma
(s)\Vert},\quad i\in\{1,\ldots,d\},
\]
are sufficiently close to $\{u_{i}\}_{i=1}^{d}$ to ensure that
\[
\langle q_{t}^{+},\tilde{q}\rangle>\lambda+2\varepsilon,\qquad\text{where\quad
\ }\tilde{q}=\sum\limits_{i=1}^{n}\mu_{i}\,\tilde{u}_{i}.
\]
Take now $s\in(t,t+\delta)$ in a way that the vector $\hat{q}=\left(
\Vert\gamma(s)-\gamma(t)\Vert\right)  ^{-1}\left(  \gamma(s)-\gamma(t)\right)
$ is sufficiently close to the secant $q_{t}^{+}$ so that $\langle\hat
{q},\tilde{q}\rangle>\lambda+\varepsilon$ or equivalently, $\langle-\hat
{q},\tilde{q}\rangle<-\lambda-\varepsilon.$ Since $\tilde{q},-\hat{q}\in
K(s)\cap \mathbb{S}^{d-1}$, we deduce that $\mathcal{A}(K(s))>\pi-\alpha,$ which
contradicts Corollary~\ref{Cor-implies-} for $s=t$. \hfill$\square$

\bigskip

We shall finally need the following lemma.

\begin{lemma}
\label{Lemma-repulsive} Let $\gamma:I\rightarrow{\mathbb{R}}^{d}$ be a
continuous $\lambda$-curve, with $\lambda\in\lbrack-1,1/d)$ and $\alpha
=\arccos(\lambda).$ Then there exists $\rho>0$ such that for every $t\in I$
and $q_{t}^{+}\in\mathrm{sec}^{+}(t)$, there exists $\xi_{t}\in\mathbb{S}
^{d-1}$ satisfying
\begin{equation}
\langle\xi_{t},u\rangle\leq-\rho<0,\quad\text{for all }u\in K(t) \label{proxi}
\end{equation}
and
\begin{equation}
\langle\xi_{t},q_{t}^{+}\rangle\,\geq\,\rho\,>0\,. \label{proxi0}
\end{equation}

\end{lemma}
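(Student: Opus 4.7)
\noindent\textit{Plan.} The idea is to apply Lemma~\ref{Lem-2.5} quantitatively to an augmented subset of the sphere that encodes, at once, the past directions in $K(t)\cap\mathbb{S}^{d-1}$ and the anti-direction $-q_t^+$ of the chosen forward secant. Separating $0$ from the convex hull of this augmented set will produce a single unit vector $\xi_t$ that simultaneously satisfies \eqref{proxi} and \eqref{proxi0}.

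Given $t\in I$ and $q_t^+\in\mathrm{sec}^+(t)$, I would consider
\[
S_t \;:=\; \bigl(K(t)\cap\mathbb{S}^{d-1}\bigr)\,\cup\,\{-q_t^+\}.
\]
The first step is to verify that $\langle x,y\rangle\geq-\lambda$ for every $x,y\in S_t$. When $x,y\in K(t)\cap\mathbb{S}^{d-1}$ this is exactly Corollary~\ref{Cor-implies-}: the bound $\mathcal{A}(K(t))\leq\pi-\alpha$ translates into $A(K(t)\cap\mathbb{S}^{d-1})\geq\cos(\pi-\alpha)=-\lambda$. For the mixed case $\langle u,-q_t^+\rangle$ with $u\in K(t)\cap\mathbb{S}^{d-1}$, Proposition~\ref{prop-gap} gives $u\notin C(q_t^+,\alpha)$, hence $\langle u,q_t^+\rangle\leq\lambda$, and therefore $\langle u,-q_t^+\rangle\geq-\lambda$; the case $x=y=-q_t^+$ is trivial.

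Since $\lambda<1/d$, Lemma~\ref{Lem-2.5} then applies to $S_t$ and yields $0\notin\mathrm{conv}(S_t)$. Revisiting the proof of that lemma but retaining the quadratic term, every convex combination $w=\sum_{i=0}^{d}\alpha_i x_i$ of points in $S_t$ satisfies
\[
\|w\|^{2} \;\geq\; (1+\lambda)\sum_{i=0}^{d}\alpha_i^{2}-\lambda \;\geq\; \frac{1+\lambda}{d+1}-\lambda \;=\; \frac{1-\lambda d}{d+1} \;=:\; \rho^{2} \;>\; 0,
\]
a bound depending only on $\lambda$ and $d$; in particular $\mathrm{dist}(0,\mathrm{conv}(S_t))\geq\rho$. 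Let $w_t$ be the (Euclidean) nearest point of $\mathrm{conv}(S_t)$ to the origin and set $\xi_t:=-w_t/\|w_t\|\in\mathbb{S}^{d-1}$. The obtuse-angle characterization of the projection onto a convex set yields $\langle w_t,x\rangle\geq\|w_t\|^{2}$ for every $x\in\mathrm{conv}(S_t)$, whence $\langle\xi_t,x\rangle\leq-\|w_t\|\leq-\rho$ throughout $S_t$. Specializing to $x=u\in K(t)\cap\mathbb{S}^{d-1}$ gives \eqref{proxi}, while $x=-q_t^+$ gives $\langle\xi_t,q_t^+\rangle\geq\rho$, i.e.~\eqref{proxi0}.

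The only non-routine move is spotting the correct augmentation $S_t$: by tossing the single extra point $-q_t^+$ into the past-direction set one repackages the two separate ``one-sided'' conditions supplied by Corollary~\ref{Cor-implies-} and Proposition~\ref{prop-gap} into a single uniform aperture condition, which is exactly the hypothesis of Lemma~\ref{Lem-2.5}. Everything else is quantitative bookkeeping of the Caratheodory argument already carried out in the proof of that lemma, and the uniformity of $\rho$ in $t$ is automatic since the bound involves only $\lambda$ and $d$.
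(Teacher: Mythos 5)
Your proof is correct, and it takes a genuinely different route from the paper's. The paper works with the $\delta$-enlargement $K(t)_{\delta}$ of the cone $K(t)$: it deduces from Proposition~\ref{prop-gap} that $K(t)_{\delta}\cap\mathrm{sec}^{+}(t)=\emptyset$ for $\delta\leq\sqrt{2(1-\lambda)}$, picks $\tilde{\xi}$ in the polar cone $(K(t)_{\delta})^{o}$ with $\langle\tilde{\xi},q_{t}^{+}\rangle>0$ via the bipolar theorem, and then uses the ball-inclusion \eqref{polar} to turn both inequalities into quantitative ones with $\rho=\delta/2$. You instead fold the two one-sided conditions into a single aperture condition by adjoining $-q_{t}^{+}$ to $K(t)\cap\mathbb{S}^{d-1}$ (the pairwise bound on $K(t)\cap\mathbb{S}^{d-1}$ coming from Corollary~\ref{Cor-implies-}, the mixed bound from Proposition~\ref{prop-gap}), then rerun the Carath\'eodory computation of Lemma~\ref{Lem-2.5} quantitatively to get $\dist(0,\mathrm{conv}(S_{t}))\geq\sqrt{(1-\lambda d)/(d+1)}$, and take $\xi_{t}$ from the nearest-point projection. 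All steps check out: the identity $\sum_{i\neq j}\alpha_{i}\alpha_{j}=1-\sum_{i}\alpha_{i}^{2}$ together with $\sum_{i}\alpha_{i}^{2}\geq 1/(d+1)$ gives exactly your bound, $\mathrm{conv}(S_{t})$ is compact so the projection exists, and the obtuse-angle inequality delivers both \eqref{proxi} and \eqref{proxi0}. What your approach buys is an explicit constant $\rho=\sqrt{(1-\lambda d)/(d+1)}$ depending only on $\lambda$ and $d$, and it avoids the polarity/enlargement machinery entirely, at the cost of invoking Corollary~\ref{Cor-implies-} one more time. One shared cosmetic point: as in the paper's own proof, what is really obtained (and what is used in Theorem~\ref{Thm_l-curve-rect}) is $\langle\xi_{t},u\rangle\leq-\rho\Vert u\Vert$ for $u\in K(t)$, since the literal inequality \eqref{proxi} fails at $u=0$; your argument gives precisely this normalized form.
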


\noindent\textit{Proof.} Let $\delta\leq\sqrt{2(1-\lambda)}$ and $\rho
=\delta/2.$ Then for every $t\in I$ and $q_{t}^{+}\in\mathrm{sec}^{+}(t)$, we
have $\mathbb{S}^{d-1}\cap B(q_{t}^{+},\delta)\subset C(q_{t}^{+},\alpha)$. We deduce
from Proposition~\ref{prop-gap} that the $\delta$-enlargement of the cone
$K(t)$ satisfies:
\[
K(t)_{\delta}\cap\mathrm{sec}^{+}(t)=\emptyset.
\]
Setting $\tilde{N}(t)=(K(t)_{\delta})^{o}$ and  $N(t)=K(t)^{o}$ (the polar of
$K(t)_{\delta}$ and $K(t)$ respectively), we deduce by \eqref{polar} that
\begin{equation}
\bar{B}(\xi,\delta)\cap \mathbb{S}^{d-1}\subset N(t)\text{,\quad for every }\xi\in\tilde
{N}(t)\cap \mathbb{S}^{d-1}. \label{r1}
\end{equation}
Let us now fix $q_{t}^{+}\in\mathrm{sec}^{+}(t).$ Then by the bipolar theorem
we get $q_{t}^{+}\notin\tilde{N}(t)^{o}=K(t)_{\delta},$ that is, there exists
$\tilde{\xi}\in\tilde{N}(t)\cap \mathbb{S}^{d-1}$ such that $\langle\tilde{\xi},q_{t}
^{+}\rangle>0.$ Maximizing the functional $q_{t}^{+}$ over the closed ball $\bar{B}(\tilde{\xi},\rho)$ 
we obtain $\xi_{t}\in\mathbb{S}^{d-1}$ such that (\ref{proxi0}) holds. Since $B(\xi
_{t},\rho)\subset B(\tilde{\xi},\delta)\subset N(t),$ we easily deduce that
(\ref{proxi}) also holds. \hfill$\square$

\bigskip

\begin{figure}[h!]
\includegraphics[width=0.6\textwidth]{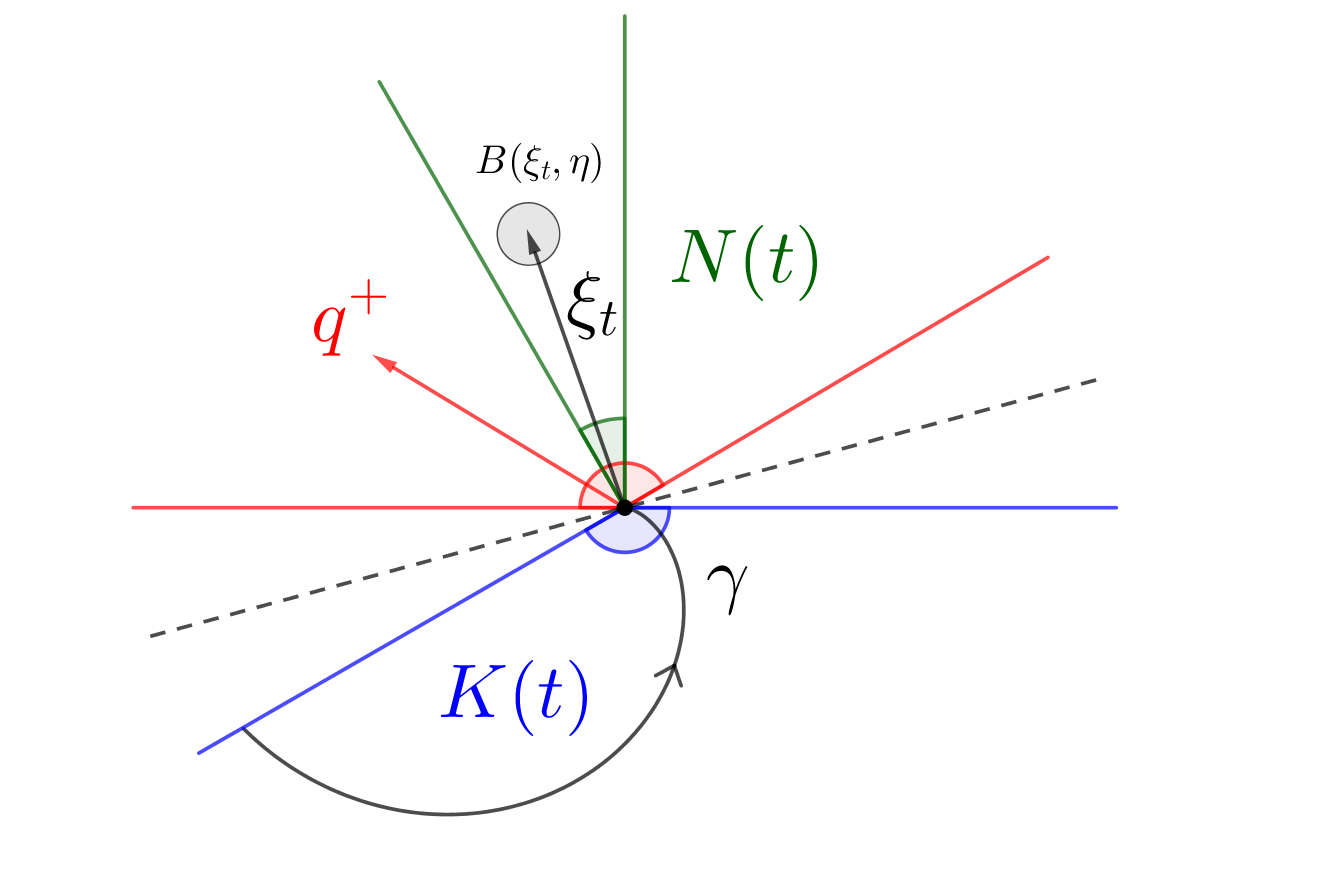}
\caption{The initial part of the curve generates the cone $K(t)$ (in blue) with aperture
$\mathcal{A}(K(t))\leq\pi-\alpha$ and avoids the cone generated by the positive secants (in red).}
\label{circletangent}
\end{figure}

We are now ready to prove the main result of this section.

\begin{theorem}
[rectifiability]\label{Thm_l-curve-rect}Every continuous $\lambda$-curve
$\gamma:I\rightarrow{\mathbb{R}}^{d}$ with $\lambda<1/d$ is rectifiable. In
particular, bounded $\lambda$-curves with $\lambda<1/d$ have finite length.
\end{theorem}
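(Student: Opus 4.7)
The plan is to apply Lemma~\ref{Lemma-repulsive} pointwise to obtain, near each $t_0 \in I$, a unit direction along which the curve is locally strictly monotone with a quantitative slope, and then convert boundedness of $\Gamma$ into a length bound by a covering argument.

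Fix $t_0 \in I$, pick a forward secant $q_{t_0}^+ \in \mathrm{sec}^+(t_0)$, and let $\xi := \xi_{t_0}$ and $\rho > 0$ be those supplied by Lemma~\ref{Lemma-repulsive}; so $\langle \xi, u\rangle \le -\rho$ for every $u \in K(t_0) \cap \mathbb{S}^{d-1}$ and $\langle \xi, q_{t_0}^+\rangle \ge \rho$, where $\rho$ depends only on $\lambda$. By Corollary~\ref{Cor-implies-}, this already gives $\langle \xi, \gamma(s) - \gamma(t_0)\rangle \le -\rho\,\|\gamma(s) - \gamma(t_0)\|$ for every $s \le t_0$ in $I$. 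The main step is the following local persistence claim: there exists $\eta = \eta(t_0) > 0$ such that
\[
\langle \xi, u\rangle \le -\tfrac{\rho}{2}, \qquad \text{for all } u \in K(s') \cap \mathbb{S}^{d-1} \text{ and all } s' \in [t_0, t_0+\eta] \cap I.
\]
I would prove this by combining continuity of $\gamma$ at $t_0$ with Proposition~\ref{prop-gap}. The generators of $K(s')$ are normalized differences $\gamma(\tau) - \gamma(s')$ with $\tau \le s'$. For $\tau \le t_0$ they are small perturbations of directions in $K(t_0)$, so they retain a $\xi$-projection $\le -\rho + o(1)$ as $s' \to t_0^+$. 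For $\tau \in (t_0, s']$, their accumulation directions, again by the conical split at $s'$ in view of Proposition~\ref{prop-gap} and continuity, must lie in the closed half-space $\{\langle \xi, \cdot\rangle \le -\rho/2\}$ provided $s'$ is close enough to $t_0$. Taking closed convex conical hulls preserves the separation with gap $\rho/2$.

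Granted the persistence claim, for any $t_0 \le s < s' \le t_0 + \eta$ the vector $\gamma(s) - \gamma(s')$ lies in $K(s')$, hence
\[
\langle \xi, \gamma(s') - \gamma(s)\rangle \;\ge\; \tfrac{\rho}{2}\,\|\gamma(s') - \gamma(s)\|.
\]
Telescoping along any partition $t_0 = u_0 < u_1 < \cdots < u_m \le t_0 + \eta$ yields
\[
\sum_{i=0}^{m-1} \|\gamma(u_{i+1}) - \gamma(u_i)\| \;\le\; \tfrac{2}{\rho}\,\langle \xi, \gamma(u_m) - \gamma(t_0)\rangle \;\le\; \tfrac{4}{\rho}\,\mathrm{diam}(\Gamma),
\]
so $\gamma|_{[t_0,t_0+\eta]}$ has finite length. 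Since uniform continuity of $\gamma$ on any compact subinterval of $I$ gives a uniform lower bound on $\eta$, every point of $I$ sits inside such a right-neighborhood of some earlier point, proving rectifiability. If $\Gamma$ is bounded, the at-most-one-accumulation-point property of $\lambda$-curves forces $\gamma$ to extend continuously to the compact closure of $I$; Heine--Borel then covers this closure by finitely many such right-neighborhoods, and summing their (uniform) length bound gives $\ell(\gamma) < \infty$.

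The main obstacle is the local persistence claim. At $t_0$ itself the separation provided by Lemma~\ref{Lemma-repulsive} is open, but $K(s')$ can strictly grow as $s'$ increases past $t_0$, and one must quantify precisely how much of this growth is compatible with the separation being preserved at gap $\rho/2$; this is really a (semi)continuity statement for the set-valued map $s' \mapsto K(s')$ at $s' = t_0$.
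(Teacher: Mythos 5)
There is a genuine gap, and it sits exactly where you flagged it: the ``local persistence claim'' is not a semicontinuity technicality but the entire difficulty of the theorem, and the argument you sketch for it does not work. The direction $\xi_{t_0}$ produced by Lemma~\ref{Lemma-repulsive} is tailored to \emph{one} forward secant $q_{t_0}^+$; it satisfies \eqref{proxi} on $K(t_0)$ and \eqref{proxi0} for that one secant only. For $s'>t_0$ the cone $K(s')$ contains the unit vector $\bigl(\gamma(t_0)-\gamma(s')\bigr)/\|\gamma(t_0)-\gamma(s')\|$, whose accumulation points as $s'\searrow t_0$ are $-q'$ with $q'\in\mathrm{sec}^{+}(t_0)$ possibly different from $q_{t_0}^+$, and nothing guarantees $\langle\xi_{t_0},-q'\rangle\le-\rho/2$ for those; it also contains the chords $\bigl(\gamma(\tau)-\gamma(s')\bigr)/\|\gamma(\tau)-\gamma(s')\|$ with $\tau\in(t_0,s')$, about which Proposition~\ref{prop-gap} says nothing useful here, since that proposition is a statement at $s'$ relating $\mathrm{sec}^{+}(s')$ to $K(s')$ and involves $\xi_{t_0}$ in no way. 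Note that your claim, if true, would make $\gamma|_{[t_0,t_0+\eta]}$ a Lipschitz graph over the line $\mathbb{R}\xi_{t_0}$; the hypotheses give no such local graph structure. A second unjustified step is that ``uniform continuity gives a uniform lower bound on $\eta$'': the radius $\eta(t_0)$ in your claim is governed by how fast the set-valued map $s'\mapsto K(s')$ grows, which is not controlled by the modulus of continuity of $\gamma$.

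The paper's proof is designed precisely to avoid both points. It takes a finite $\eta$-net $\mathcal{F}=\{\xi_1,\dots,\xi_N\}$ of directions and, for each pair $(t,s)$ with $s$ close to $t$, selects a possibly different $\xi_i$ satisfying only the two one-sided estimates \eqref{f-1}--\eqref{f-2} at the single time $t$; the bookkeeping is then done by the monotone, bounded width functional $W_{\mathcal{F}}=\sum_i W_i$, which gains at least $\eta\|\gamma(s)-\gamma(t)\|$ over each small step because $W_i$ jumps while the other $W_j$ cannot decrease. No single direction is ever required to work on a whole interval. Finally, the passage from the local increment \eqref{aa} to the global inequality \eqref{ab} --- the step you replace by a Heine--Borel covering with a uniform $\eta$ --- is handled in the paper by a transfinite-induction argument precisely because $\delta_t$ is \emph{not} uniform. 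To repair your proof you would either have to prove the persistence claim (which I do not believe holds with a single fixed $\xi_{t_0}$) or switch to an aggregate monotone quantity as the paper does.
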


\noindent\textit{Proof.} We may assume that $I=[0,+\infty)$ and that
$\gamma\ $is bounded. Set $\eta=\rho/3,$ where $\rho$ is given by Lemma
\ref{Lemma-repulsive}. Since $\mathbb{S}^{d-1}$ is compact, there exists an
$\eta$-net $\mathcal{F}:=\{\xi_{1},\cdots,\xi_{N}\}$, satisfying that for
every $v\in\mathbb{S}^{d-1}$, there exists $i\in\{1,\cdots,N\}$ such that
$\langle v,\xi_{i}\rangle>\eta$ (that is, $v$ is $\eta$-close to some $\xi
_{i}\in\mathcal{F}$). Then we deduce from Lemma~\ref{Lemma-repulsive} that for
every $t\in I$ and $q_{t}^{+}\in{\sec}^{+}(t)$, there exists $\xi_{i}
\in\mathcal{F}$ such that
\begin{equation}
\langle\xi_{i},q^{+}\rangle>2\eta\qquad\text{and\qquad}\langle\xi_{i}
,u\rangle\leq-2\eta<0,\quad\text{for all }u\in K(t).\label{eq:sec1}
\end{equation}
Reasoning by contradiction we can prove the existence of some $\delta_{t}>0$
such that for every $s\in\lbrack t,t+\delta_{t})$ there exists $q_{t,s}^{+}
\in\mathrm{sec}^{+}(t)$ such that
\begin{equation}
\left\Vert \frac{\gamma(s)-\gamma(t)}{||\gamma(s)-\gamma(t)||}-q_{t,s}
^{+}\right\Vert <\eta.\label{eq:sec2}
\end{equation}
Combining the above we deduce that for every $t\in I$ and $s\in\lbrack
t,t+\delta_{t})$, there exists $\xi_{i}\in\mathcal{F}$ such that
\begin{equation}
\langle\xi_{i},\gamma(s)-\gamma(t)\rangle\geq\,\eta\,\Vert\gamma
(s)-\gamma(t)\Vert.\label{f-1}
\end{equation}
On the other hand, it follows directly from (\ref{eq:sec1}) that for every
$\tau\in\lbrack0,t)$
\begin{equation}
\langle\xi_{i},\gamma(t)-\gamma(\tau)\rangle\geq\,\eta\,\Vert\gamma
(t)-\gamma(\tau)\Vert.\label{f-2}
\end{equation}
Considering for $i\in\{1,\ldots,N\}$ the projection operator
\[
\left\{
\begin{array}
[c]{l}
\pi_{i}:\mathbb{R}^{d}\rightarrow\mathbb{R\xi}_{i} \medskip \\
\pi_{i}(x)=\,\langle\mathbb{\xi}_{i},x\rangle\,\xi_{i}
\end{array}
\right.
\]
we define $W_{i}(t)$ to be the width of the projection of the initial part of
the curve $\Gamma(t)$ onto $\mathbb{R\xi}_{i}$, that is,
\[
W_{i}(t):=\mathcal{H}^{1}(\pi_{i}(\Gamma(t))),\quad t\in I,
\]
where $\mathcal{H}^{1}$ denotes the 1-dimensional Lebesgue measure. Notice
that $\mathcal{H}^{1}(\pi_{i}(\Gamma(t)))$ is simply the length of the bounded
interval $\pi_{i}(\Gamma(t))$ of $\mathbb{R\xi}_{i}$. It follows readily that
for every $i\in\{1,\ldots,N\}$ the function $t\mapsto W_{i}(\tau)$ is
non-decreasing on $[0,T_{\infty})$ and bounded above by $r:=\mathrm{diam}
(\gamma(I))$. Therefore, the function
\[
W_{\mathcal{F}}(t):=\sum_{i=1}^{N}W_{i}(t),
\]
is non-decreasing on $I$ and bounded above by $Nr.$ We now deduce from
(\ref{f-1}) and (\ref{f-2}) that for every $t\in I$ there exists $\delta
_{t}>0$ such that for all $s\in\lbrack t,t+\delta_{t})$ we have
\begin{equation}
W_{\mathcal{F}}(s)-W_{\mathcal{F}}(t)\geq\eta\,\Vert\gamma(s)-\gamma
(t)\Vert.\label{aa}
\end{equation}
The result follows via a standard argument if we establish that for any
$a,b\in I$ with $a<b$ it holds:
\begin{equation}
W_{\mathcal{F}}(b)-W_{\mathcal{F}}(a)\geq\eta\,\Vert\gamma(b)-\gamma
(a)\Vert.\label{ab}
\end{equation}
Let us assume, towards a contradiction, that (\ref{ab}) does not hold, that
is,
\[
W_{\mathcal{F}}(b)-W_{\mathcal{F}}(a)+\varepsilon<\eta\,\Vert\gamma
(b)-\gamma(a)\Vert\text{,}\quad\text{ for some }\varepsilon>0.
\]
Set $\sigma(t)=\sup\{s>t:\; \eqref{aa} \,\,\text{holds}\}$, for $t\in\lbrack a,b)$. Then
our assumption yields that for every $t\in\lbrack a,b)$ we have $a\leq
t+\delta_{t}\leq\sigma(t)<b.$ Using transfinite induction we construct a
(necessarily) countable set $\Lambda=\{t_{\mu}\}_{\mu\leq\hat{\varsigma}}$ by
setting $t_{1}=a,$ $t_{\mu}=\sigma(t_{\mu^{-}})$ if $\mu=\mu^{-}+1$ is a
successor ordinal, and $t_{\mu}=\sup\{t_{\nu}:\nu<\mu\}$ if $\mu$ is a limit
ordinal and we stop when $t_{\hat{\varsigma}}=b$. Let now $\{\varepsilon
_{n}\}_{n\in\mathbb{N}}\subset(0,\varepsilon)$ with 
$\sum_{n\in N}\varepsilon_{n}=\varepsilon\eta^{-1}.$ Let $i:\Lambda\rightarrow\mathbb{N}$ be
an injection of $\Lambda$ into $\mathbb{N}$. Then denoting by $\mu^{+}$ the
successor of $\mu,$ we obtain by continuity, that for each ordinal $\mu$ there
exists $t_{\mu}\leq s_{\mu}<\sigma(t_{\mu}):=t_{\mu^{+}}$ such that
$||\gamma(s_{\mu})-\gamma(t_{\mu^{+}})\Vert<\varepsilon_{i(\mu)}$. We deduce
by (\ref{aa}):
\begin{align*}
\Vert\gamma(b)-\gamma(a)\Vert & \leq
\sum_{\mu\in\Lambda}
\Vert\gamma(t_{\mu^{+}})-\gamma(t_{\mu})\Vert\leq
\sum_{\mu\in\Lambda}
\left(  \Vert\gamma(s_{\mu})-\gamma(t_{\mu})\Vert+\varepsilon_{i(\mu)}\right)
\\
& \leq\frac{1}{\eta}\left(
\sum_{\mu\in\Lambda}
\left(  W_{\mathcal{F}}(s_{\mu})-W_{\mathcal{F}}(t_{\mu})\right)
\,+\,\varepsilon\right)  \leq\frac{1}{\eta}\left(  W_{\mathcal{F}
}(b)-W_{\mathcal{F}}(a)+\varepsilon\right)  ,
\end{align*}
which contradicts (\ref{ab}).\hfill$\square$

\bigskip

\begin{remark}
[universal constant] The above proof reveals that the length $\ell(\gamma)$ of any
$\lambda$-curve lying in a set of diameter $r$ is bounded by the
quantity $N\cdot\eta^{-1}\cdot r$. Since the constant $\eta>0$ is
determined in Lemma~\ref{Lemma-repulsive}, it only
depends on $\lambda$ and the dimension $d$ of the space (in
particular, it is independent of the specific $\lambda$-curve $\gamma$). Since
$N$ (the cardinality of the net $\mathcal{F}$) also depends exclusively on $\eta$ and the
dimension $d$, we conclude that for a given $\lambda\in\lbrack-1,1/d)$ there exists
a prior bound for the lengths of all $\lambda$-curves $\gamma$ lying inside a
prescribed bounded subset of $\mathbb{R}^{d}$.
\end{remark}

\begin{remark}[Double cone property]\label{essence}
A close inspection of Theorem~\ref{Thm_l-curve-rect} shows that the proof depends exclusively on \eqref{proxi}--\eqref{proxi0} which in turn depend on \eqref{todo}. Therefore, every bounded continuous curve $\gamma$ satisfying \eqref{todo} has finite length.
\end{remark} 

\bigskip


\section{A bounded curve with the $\lambda$-cone property and infinite length}

In this section we consider continuous right differentiable curves
$\gamma:I\rightarrow\mathbb{R}^{d}$ satisfying the $\lambda$-cone property
(Definition~\ref{def_l-cone-prop}). In the sequel we denote by $\gamma
^{\prime}(\tau)$ the right derivative of $\gamma$ at the point $\tau$ and we
assume this derivative is nonzero. Observe that in this case we have
\[
\sec^{+}(t)=\left\{  \frac{\gamma'(t)}{\Vert\gamma'(t)\Vert
}\right\}  .
\]
So $\gamma$ satisfies the $\lambda$-cone property
if,  for all $t,\tau\in I$ with $t<\tau$, \eqref{sec-eel} holds, or equivalently:
\[
\langle\gamma'(\tau),\gamma(t)-\gamma(\tau)\rangle\leq\lambda
\,\vert\vert\gamma'(\tau)\vert\vert \,
\vert\vert \gamma(t)-\gamma(\tau)\vert\vert .
\]
This means that the angle between the vectors $\gamma'(\tau)$ and 
$\gamma(t)-\gamma(\tau)$ is greater or equal to $\alpha$, where $\alpha=\arccos(\lambda)$.
We simplify the notation by setting
\begin{equation}
C(t,\alpha):=\gamma(t)+C\left(  \frac{\gamma'(t)}{\Vert\gamma'(t)\Vert},\alpha\right)  . \label{simple}
\end{equation}
A curve $\gamma$ satisfying the above property will be also called a $\lambda
$-eel. The reason is as follows: the set $\Gamma(\tau):=\{\gamma
(t);\,t\in I,\,t<\tau\}$ is the apparent body (or tail) of a $\lambda$-eel at
time $\tau$ going out of a hole. The cone $C(\tau,\alpha)$ represents what the
$\lambda$-eel can see at time $\tau$. The $\lambda$-cone property just says
that the $\lambda$-eel never sees its apparent tail. Notice that $\pi/2$-eels
correspond to self-expanded curves. Therefore, if the range of $\gamma$ is
bounded and $\gamma$ is a $\pi/2$-eel, then its length is finite (\cite{DDDL},
\cite{LMV}).\smallskip


Recall from the introduction that a curve $\gamma$ is
self-expanded if for all $\tau\in I$, the map $t\mapsto
d(\gamma(t),\gamma(\tau))$ is non decreasing on $I\cap\lbrack\tau,+\infty)$. The following lemma illustrates that one can also associate a Lyapunov function to  $\lambda$-eels.

\begin{lemma}\label{lem1}
If $\gamma:I\rightarrow {\mathbb{R}}^{d}$ is a $\lambda-$eel, then the function 
\begin{equation*}
t\mapsto \|\gamma(t_1)-\gamma(t)\|+\lambda\ell(\gamma_{|[t_1,t]})
\end{equation*}
is non-decreasing on $I\cap[t_1,\infty).$
\end{lemma}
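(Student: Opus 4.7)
The plan is to estimate the lower right Dini derivative of
\[
f(t) := \Vert\gamma(t_1)-\gamma(t)\Vert + \lambda\,\ell(\gamma_{|[t_1,t]})
\]
and then invoke the classical monotonicity criterion: a continuous function whose lower right Dini derivative is nonnegative everywhere is non-decreasing. Write $\phi(t) := \Vert\gamma(t_1)-\gamma(t)\Vert$ and $L(t) := \ell(\gamma_{|[t_1,t]})$, so that $f=\phi+\lambda L$.

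First I would fix $t > t_1$ with $\gamma(t)\neq\gamma(t_1)$. Since $\gamma$ has a nonzero right derivative at $t$ and the Euclidean norm is smooth away from the origin, $\phi$ is right-differentiable at $t$ with
\[
\phi'_+(t) \;=\; \frac{\langle \gamma'(t),\,\gamma(t)-\gamma(t_1)\rangle}{\Vert\gamma(t)-\gamma(t_1)\Vert}.
\]
Because $\mathrm{sec}^+(t)=\{\gamma'(t)/\Vert\gamma'(t)\Vert\}$, the $\lambda$-cone property applied with the past point $u=t_1<t$ reads $\langle \gamma'(t),\gamma(t_1)-\gamma(t)\rangle \leq \lambda\Vert\gamma'(t)\Vert\Vert\gamma(t_1)-\gamma(t)\Vert$, which is exactly
\[
\phi'_+(t) \;\geq\; -\lambda\,\Vert\gamma'(t)\Vert.
\]

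Next, for the length term I would use the elementary chord-length inequality $\Vert\gamma(t+h)-\gamma(t)\Vert \leq L(t+h)-L(t)$ and the expansion $\gamma(t+h)=\gamma(t)+h\gamma'(t)+o(h)$, which jointly give
\[
\liminf_{h\to 0^+}\frac{L(t+h)-L(t)}{h} \;\geq\; \Vert\gamma'(t)\Vert.
\]
Assuming $\lambda\geq 0$ (the regime of interest here, since the main theorem requires $\lambda\geq 1/\sqrt{5}$), summing these two inequalities yields $\liminf_{h\to 0^+}(f(t+h)-f(t))/h \geq 0$ at every $t>t_1$ with $\gamma(t)\neq\gamma(t_1)$. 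Continuity of $\phi$ is immediate from continuity of $\gamma$, and $L$ is continuous wherever it is finite; beyond any point where $L$ becomes infinite the statement is trivial. The classical Dini-derivative monotonicity criterion concludes that $f$ is non-decreasing.

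The degenerate situation $\gamma(t)=\gamma(t_1)$ at some $t>t_1$ is handled by the observation that $\phi$ then attains a local minimum, so its right Dini derivative is automatically nonnegative and the same conclusion applies. The main obstacle I see is the range $\lambda<0$: the chord-length inequality controls $L(t+h)-L(t)$ only from below, and multiplying by a negative $\lambda$ reverses the direction, so the direct Dini-sum step fails. To cover that regime one would need to upgrade the length estimate using the stronger geometric rigidity of the negative $\lambda$-cone property (which forces the curve to be quantitatively close to a straight line locally), or else present a separate argument there.
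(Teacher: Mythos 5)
Your argument is correct and takes essentially the same route as the paper: the paper derives the same pointwise inequality $\tfrac{d}{d\tau}\|\gamma(\tau)-\gamma(t_1)\|\geq-\lambda\|\gamma'(\tau)\|$ from the cone property and integrates it, identifying $\int\|\gamma'\|$ with the arc length, whereas you package the identical estimates via Dini derivatives and the monotonicity criterion. Your caveat about $\lambda<0$ is well taken but not a defect relative to the source: the paper's own proof carries the same implicit restriction (it needs $\int\|\gamma'\|=\ell$, i.e.\ absolute continuity, for negative $\lambda$), and the lemma is only invoked in the regime $\lambda=1/\sqrt{5}>0$.
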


\begin{proof}
By definition,
\[
\frac{d}{d\tau}(\|\gamma(\tau)-\gamma(t)\|)=\left\langle\gamma'(\tau),\frac{\gamma(\tau)-\gamma(t)}{\|\gamma(\tau)-\gamma(t)\|}\right\rangle\geq -\lambda \|\gamma'(\tau)\|\quad\forall\, t<\tau.
\] 
For $t<t_1<t_2$, integrating for $\tau\in [t_2, t_3]$ we obtain
\[
\int_{t_2}^{t_3}\frac{d}{d\tau}(\|\gamma(\tau)-\gamma(t)\|)d\tau\geq -\lambda \int_{t_2}^{t_3}\|\gamma'(s)\|ds\quad\forall\, t<\tau,
\] 
which implies
\[
\|\gamma(t_3)-\gamma(t)\|-\|\gamma(t_2)-\gamma(t)\|\geq -\lambda \ell(\gamma_{|[t_2,t_3]}).
\] 
Since $ \ell(\gamma_{|[t_2,t_3]})= \ell(\gamma_{|[t_1,t_3]})- \ell(\gamma_{|[t_1,t_2]})$ the conclusion follows.
\end{proof}

Our main aim now is to prove the following result.

\begin{theorem}
[$\lambda$-eel of infinite length]\label{main} Assume 
$\lambda={\frac{1}{\sqrt{5}}}$ 
(i.e. $\alpha=\arccos{\frac{1}{\sqrt{5}}}$), and let $B=\overline{B}(0,1)$
the unit ball of ${\mathbb{R}}^{3}$. Then, there
exists a $\lambda$-eel $\gamma:[0,+\infty)\to B$ of infinite length.
Moreover $\lim\limits_{t\to\infty}\gamma(t)$ exists.
\end{theorem}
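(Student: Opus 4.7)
Since Theorem~\ref{prop-2-dim} rules out such an example in the plane, the construction must genuinely exploit the third dimension. The value $\lambda=1/\sqrt{5}$ gives $\cos\alpha=1/\sqrt{5}$ and $\tan\alpha=2$, which strongly suggests that the example is designed around a geometric configuration with a $1\!:\!2$ aspect ratio. My plan is to exhibit an explicit polygonal spiral $\gamma:[0,\infty)\to B$ accumulating to a point $\gamma_{\infty}\in B$, whose successive edges have lengths $\ell_{n}$ with $\sum_{n}\ell_{n}=\infty$ but whose vertex sequence $p_{n}=\sum_{k\le n}\ell_{k}v_{k}$ converges.

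The natural attempt is a cyclic construction: pick a symmetric finite set of directions $\{w_{1},\dots,w_{N}\}\subset \mathbb{S}^{2}$ with $\sum_{i=1}^{N}w_{i}=0$ and iterate $v_{n}=w_{(n\bmod N)}$, $\ell_{n}=c/n$. The harmonic divergence yields infinite length, while the cancellation condition combined with an Abel / alternating summation argument forces the partial sums $(p_{n})$ to converge. A final affine normalization places the image inside $B$. The angles between the $w_{i}$ are to be tuned so that the $\lambda$-eel property holds with the specific value $\alpha=\arccos(1/\sqrt{5})$ precisely because of a $1\!:\!2$ configuration embedded in the scheme; an early indication that the scheme must be 3-dimensional and not planar is that three coplanar directions at $120^{\circ}$ already produce a cosine of $1/2 > 1/\sqrt{5}$ between the forward direction at a vertex and the direction toward the preceding vertex.

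The main obstacle, and the analytic heart of the proof, will be verifying the $\lambda$-cone property
\[
\langle v_{n},\,q-\gamma(\tau)\rangle\,\le\,\tfrac{1}{\sqrt{5}}\,\|v_{n}\|\,\|q-\gamma(\tau)\|
\]
globally, for every $\tau$ on every edge of $\gamma$ and every past point $q\in\Gamma(\tau)$. I would split this verification into three regimes. The \emph{very recent past} (a few edges back) is handled directly by the pairwise angles between consecutive $w_{i}$. The \emph{far past} (many periods back) is easy because the relative position $q-\gamma(\tau)$ is essentially $p_{k}-\gamma_{\infty}$, a fixed non-zero vector in the half-space opposite to $v_{n}$ up to a small correction. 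The delicate case is the \emph{intermediate past}, around one full period back: there the cancellation $\sum_{i}w_{i}=0$ makes the leading-order relative displacement vanish, so the angle is determined by the next-order term coming from $\ell_{n}\sim 1/n$ and from $t$'s position within the current period. One must compute this correction precisely and check that the resulting cosine is at most $1/\sqrt{5}$ uniformly, with the extremal configuration explaining why $1/\sqrt{5}$ is exactly the threshold. Once the cone property is secured on every edge, boundedness and the existence of $\lim_{t\to\infty}\gamma(t)$ follow at once from the convergence of $(p_{n})$, and infinite length from the divergence of $\sum 1/n$; the monotone Lyapunov quantity of Lemma~\ref{lem1} may also be invoked to streamline bookkeeping between displacement and accumulated length.
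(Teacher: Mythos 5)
There is a genuine gap: your text is a programme rather than a proof. The step you yourself single out as ``the analytic heart'' --- verifying the inequality $\langle v_n, q-\gamma(\tau)\rangle\le\frac{1}{\sqrt5}\|v_n\|\,\|q-\gamma(\tau)\|$ for every $\tau$ and every past point $q$ --- is precisely the content of the theorem, and it is left entirely open. Moreover, the scheme you propose (a cyclic direction set $w_1,\dots,w_N\subset\mathbb{S}^2$ with $\sum_i w_i=0$ and harmonic step lengths) runs into a concrete obstruction exactly in the regime you declare ``easy''. For $\tau$ large one has $\gamma(\tau)\approx\gamma_\infty$, and the forward direction at such $\tau$ runs cyclically through \emph{all} of $w_1,\dots,w_N$; hence every early point $q$ must satisfy $\langle w_i,\,q-\gamma_\infty\rangle\le\frac{1}{\sqrt5}\|w_i\|\,\|q-\gamma_\infty\|$ for \emph{every} $i$ simultaneously. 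There is no single ``half-space opposite to $v_n$'': since the $w_i$ span $\mathbb{R}^3$ and sum to zero, the admissible directions $(q-\gamma_\infty)/\|q-\gamma_\infty\|$ form the complement of $N$ spherical caps of angular radius $\arccos(1/\sqrt5)\approx 63.4^\circ$ centered at the $\hat w_i$. For the octahedral set $\{\pm e_1,\pm e_2,\pm e_3\}$ these caps cover $\mathbb{S}^2$ (the deep holes lie at $\arccos(1/\sqrt3)\approx 54.7^\circ<63.4^\circ$), so no early point is admissible at all; for the tetrahedral set the complement is four tiny disconnected patches, forcing the entire early curve into a cone of a few degrees with apex $\gamma_\infty$ --- impossible, because $\|p_0-\gamma_\infty\|=O(c)$ is of the same order as the first edge, so the first loop subtends an angle of order $1$ as seen from $\gamma_\infty$. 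The zero-sum symmetry that gives you convergence of the vertices is thus in direct conflict with the cone property.

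The paper's construction is built to avoid exactly this conflict, and you would need something of the sort. The curve is confined to a sequence of disjoint coaxial cylinders $\mathrm{Cyl}(r_n,[a_n,a_n+2\pi\mu r_n])$ shrinking to the limit point, so that the whole past genuinely does lie in a narrow region about the $z$-axis; inside each cylinder a length $\ge 1$ is accumulated by nesting helices of radii $r/N^{n-k}$, traversed alternately downward and upward and joined by radial segments. The role of $1/\sqrt5$ is played not by a polygonal $1{:}2$ configuration but by the pitch of the helix: one needs $\mu<1/2$, i.e.\ $\mu/\sqrt{1+\mu^2}<1/\sqrt5$, so that the helix is self-expanded and its forward $\alpha$-cone misses the axis, a thin coaxial cylinder, the radial segments, and any sufficiently remote larger coaxial cylinder (Lemmas~\ref{selfexpanded}--\ref{forward} and~\ref{bigcylinder}). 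These explicit estimates are what replace your unverified ``intermediate past'' computation; without statements and proofs at that level of precision, your argument does not establish the theorem.
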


The proof of Theorem~\ref{main} is constructive: the construction will be
carried out in three steps organized in subsections. Let us mention that the
result remains true if we require $\gamma$ to be $\mathcal{C}^{1}$-smooth (and
probably even $\mathcal{C}^{\infty}$-smooth), but the construction would then
become less transparent. Before we proceed, let us make the following remark.

\begin{remark}
Let us denote by $\lambda_{\ast}$ the infimum of all $\lambda$ for which there
exists a bounded $\lambda$-eel of infinite length inside the unit ball of
${\mathbb{R}}^{3}$. Since for $\lambda=0$ we obtain a self-expanded curve, it
follows from the above theorem that $0\leq\lambda_{\ast}\leq\frac{1}{\sqrt{5}}$. Notice that we
cannot readily conclude that $\lambda_{\ast}$ is strictly greater than $0$. (Nonetheless, according to \cite{MP} or \cite{DDDL}, for $\lambda=0$ bounded $\lambda$-eels have finite length.)
 \end{remark}

\medskip

\subsection{Helicoidal maps
}

Let us start by constructing a helicoidal curve along the $z$-axis,
which is self-expanded.

\begin{lemma}
\label{selfexpanded} There exists a positive constant $\mu<1/2$ such that, if
$\gamma:{\mathbb{R}}\rightarrow{\mathbb{R}}^{3}$ is a spiral of the form
\begin{equation}
\gamma(t)=\left(  r\,\cos t,\,r\,\sin t,\,\mu rt\right)  ,\quad t\in
\mathbb{R}\text{,}\label{g-31}
\end{equation}
then $\gamma$ is self-expanded $($hence $\gamma$ satisfies the $\lambda$-cone
property for all $\lambda\in\lbrack0,1))$.
\end{lemma}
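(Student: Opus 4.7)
The plan is to exploit the screw--motion invariance of the helix to reduce self-expandedness to a one-variable monotonicity question, and then verify that question by elementary calculus with $\mu$ taken slightly below $1/2$.

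First I would observe that a direct expansion gives
\[
\|\gamma(t_1)-\gamma(t_2)\|^{2} \;=\; r^{2}\bigl(2-2\cos(t_2-t_1)+\mu^{2}(t_2-t_1)^{2}\bigr),
\]
so the pairwise distance depends only on $s:=t_2-t_1\ge 0$. Consequently $\gamma$ is self-expanded if and only if
\[
g(s):=2-2\cos s+\mu^{2}s^{2}
\]
is non-decreasing on $[0,\infty)$, i.e.\ $g'(s)=2\sin s+2\mu^{2}s\ge 0$ for every $s\ge 0$. Equivalently, the task reduces to choosing $\mu<1/2$ with
\[
\mu^{2} \;\ge\; \sup_{s>0}\, \frac{-\sin s}{s}.
\]

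Next I would show this supremum is strictly less than $1/4$. On $(0,\pi]$ the quotient is non-positive, and on $[2\pi,\infty)$ it is bounded above by $1/(2\pi)<1/4$. Only the interval $(\pi,2\pi)$ requires care: at any interior critical point $s^{*}$ of $s\mapsto -\sin s/s$ one has $\tan s^{*}=s^{*}$, which after a short computation yields $(-\sin s^{*})/s^{*} = 1/\sqrt{1+(s^{*})^{2}}$. Since $s^{*}\in(\pi,3\pi/2)$ and since $\tan 4<4$ while $\tan s\to+\infty$ as $s\nearrow 3\pi/2$, the intermediate value theorem forces $s^{*}>4$; hence $(-\sin s^{*})/s^{*}< 1/\sqrt{17}<1/4$. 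Therefore any $\mu$ with $\mu^{2}\in[1/\sqrt{17},1/4)$ will do the job.

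The parenthetical claim (that $\gamma$ satisfies the $\lambda$-cone property for every $\lambda\in[0,1)$) is automatic from material already in place: a self-expanded curve is a $0$-curve, so by Proposition~\ref{prop-implies} it has the $0$-cone property, and this trivially implies the $\lambda$-cone property for every $\lambda\ge 0$. The only genuine obstacle is the closed-form bound $\sup_{s>0}(-\sin s)/s<1/4$; everything else is just the symmetry reduction and an invocation of Proposition~\ref{prop-implies}.
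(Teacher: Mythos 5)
Your argument is correct and reaches the paper's key inequality by a different, more self-contained route. The paper verifies self-expandedness through the tangent characterization: by the screw symmetry of the helix it suffices to check $a(t)=\langle\gamma'(0),\gamma(t)-\gamma(0)\rangle=r^{2}(\sin t+\mu^{2}t)\le 0$ for $t<0$, which rests on the equivalence (quoted from \cite{DDDL} in the introduction) between the metric definition and the condition $\langle\gamma'(\tau),\gamma(t)-\gamma(\tau)\rangle\le 0$ for $t<\tau$. You instead work directly with the metric definition, using the same symmetry to reduce everything to the monotonicity of $g(s)=2-2\cos s+\mu^{2}s^{2}$ on $[0,\infty)$. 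Since $g'(s)=2(\sin s+\mu^{2}s)$, the two verifications collapse to the identical one-variable inequality $\mu^{2}\ge\sup_{s>0}(-\sin s)/s$; what your version buys is independence from the differential characterization, and, more substantially, an actual proof of the numerical bound $\sup_{s>0}(-\sin s)/s<1/4$, which the paper simply asserts. The treatment of the parenthetical claim via Proposition~\ref{prop-implies} is exactly right.

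One step is stated too quickly: the intermediate value theorem locates \emph{a} solution of $\tan s=s$ in $(4,3\pi/2)$, but it does not by itself exclude critical points of $s\mapsto(-\sin s)/s$ in $(\pi,4]$. You need the additional (one-line) observation that $\tan$ is increasing on the branch $(\pi,3\pi/2)$, so that $\tan s\le\tan 4<\pi<s$ for all $s\in(\pi,4]$, whence no critical point lies there and every critical point in $(\pi,2\pi)$ indeed satisfies $s^{*}>4$. With that addition, your bound $\sup_{s>0}(-\sin s)/s\le 1/\sqrt{17}<1/4$ is complete and the choice $\mu^{2}\in[1/\sqrt{17},1/4)$ is valid.
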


\noindent\textit{Proof.} Let $\gamma:{\mathbb{R}}\rightarrow{\mathbb{R}}^{3}$
be a spiral along a cylinder of radius $r>0$ of the form (\ref{g-31}) and let
us show that $\gamma$ is a self-expanded curve. By symmetry, this amounts to verify
that
\[
a(t):=\langle\gamma'(0),\gamma(t)-\gamma(0)\rangle\leq0,\quad\text{for
all }\,t<0.
\]
We check easily that $\gamma(0)=\left(  r,0,0\right)  $ and $\dot{\gamma
}(0)=\left(  0,r,\mu r\right)  $, so that $a(t)=r^{2}(\sin t+\mu^{2}t).$ Since
$\sup\,\{-t^{-1}\sin t:$ $t<0\}<1/4$, we deduce that there exists $\mu <\frac{1}{2}$ 
such that the curve $\gamma$ is self-expanded. \hfill$\square$

\bigskip
\noindent
\bf Notation. \rm
Throughout this subsection, $\gamma$ will refer to the curve given in
Lemma \ref{selfexpanded} and $\mu<1/2$ will be the constant fixed there.\smallskip

The following lemma says that the curve $\gamma$ constructed in the previous lemma  
satisfies that for each $\tau$, the associated cone
$C(t,\alpha)$, $\alpha=\arccos(1/\sqrt{5})$ does not meet the $z$-axis, that is,
the axis of evolution of the spiral curve.

\begin{lemma}
Let $\gamma:{\mathbb{R}}\rightarrow{\mathbb{R}}^{3}$ be a spiral of the
form (\ref{g-31}). If $\lambda={1/\sqrt{5}}$ and $\alpha=\arccos(\lambda)$, 
then the cone $C(t,\alpha)$ does not intersect the line
parametrized by $\ell(z)=(0,0,z)$.
\end{lemma}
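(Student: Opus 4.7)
\textit{Proof plan.} The plan is to do an explicit computation. Fix $t\in\mathbb{R}$ and $z\in\mathbb{R}$, and set $p=(0,0,z)$. We want to show $p-\gamma(t)\notin C(\gamma'(t)/\|\gamma'(t)\|,\alpha)$, i.e.
\[
\langle p-\gamma(t),\gamma'(t)\rangle \;\leq\; \lambda\,\|p-\gamma(t)\|\,\|\gamma'(t)\|,
\]
with $\lambda=1/\sqrt{5}$. First I would compute the two vectors explicitly from \eqref{g-31}:
$p-\gamma(t) = (-r\cos t,-r\sin t,w)$ with $w:=z-\mu r t$, and $\gamma'(t)=(-r\sin t,r\cos t,\mu r)$. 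A direct computation gives
\[
\langle p-\gamma(t),\gamma'(t)\rangle = \mu r w, \qquad \|\gamma'(t)\|^{2}=r^{2}(1+\mu^{2}), \qquad \|p-\gamma(t)\|^{2}=r^{2}+w^{2}.
\]

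Next I would treat the trivial case $w\leq 0$: the left-hand side is non-positive while the right-hand side is non-negative, so the required inequality holds. For the nontrivial case $w>0$, the inequality to verify (after squaring, which is legitimate since both sides are positive) becomes
\[
5\mu^{2}w^{2} \;\leq\; (1+\mu^{2})(r^{2}+w^{2}),
\]
equivalently $(4\mu^{2}-1)w^{2}\leq (1+\mu^{2})r^{2}$. Since Lemma~\ref{selfexpanded} furnishes $\mu<1/2$, the coefficient $4\mu^{2}-1$ is strictly negative, so the left-hand side is $\leq 0$ while the right-hand side is $>0$. Hence the inequality holds for every $w\in\mathbb{R}$ and every $z\in\mathbb{R}$.

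There is no real obstacle: the whole lemma reduces to the clean algebraic observation that the constraint $\mu<1/2$ from the self-expanded construction automatically implies the weaker cone-inequality with aperture $\alpha=\arccos(1/\sqrt{5})$ when the target is on the axis. The two cross-terms in $\langle p-\gamma(t),\gamma'(t)\rangle$ cancel because the $xy$-projection of the axis point is opposite to $\gamma(t)$ while $\gamma'(t)$ is tangent to the circle — this cancellation is the essential geometric reason why only the vertical component $w$ survives, and why the condition depends only on $\mu$ and $\lambda$ and not on $t$ or $r$. \hfill$\square$
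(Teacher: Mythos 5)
Your computation is correct and is essentially the paper's own argument: the paper reduces to $t=0$ by symmetry and obtains the same inequality $\mu r z\leq\tfrac{1}{\sqrt{5}}\sqrt{r^{2}(1+\mu^{2})}\sqrt{r^{2}+z^{2}}$, which it verifies by noting $\mu^{-1}\sqrt{1+\mu^{2}}>\sqrt{5}$ for $\mu<1/2$ — the same fact as your $4\mu^{2}-1<0$. Your version just carries the general $t$ through via $w=z-\mu rt$ instead of invoking the symmetry reduction; the substance is identical.
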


\noindent\textit{Proof.} Under the notation of the previous lemma, it is
enough to verify that for all $z\in{\mathbb{R}}$
\[
\langle\gamma'(0),\ell(z)-\gamma(0)\rangle\leq\frac{1}{\sqrt{5}}\,\Vert
\dot{\gamma}(0)\Vert\,\left\Vert \ell(z)-\gamma(0)\right\Vert .
\]
The above condition reads
\begin{equation}
\mu rz\leq\frac{1}{\sqrt{5}}\,\sqrt{r^{2}(1+\mu^{2})}\,\sqrt{r^{2}+z^{2}},\qquad\text{for
all }z\in{\mathbb{R}},\label{eq1}
\end{equation}
or equivalently,
\begin{equation}
\frac{(z/r)}{\sqrt{1+(z/r)^{2}}}\leq\,\frac{\sqrt{(1+\mu^{2})}}{\mu\sqrt{5}
},\qquad\text{for all }z\in{\mathbb{R}}.\label{eq2}
\end{equation}
Since $t\mapsto t^{-1}\sqrt{1+t^{2}}$ is decreasing for $t>0$ and $\mu<1/2$,
we have $\mu^{-1}\sqrt{1+\mu^{2}}>\sqrt{5}$, therefore \eqref{eq2} is
satisfied.\hfill$\square$

\bigskip

We shall now enhance in the above construction to deduce that
the cone $C(\tau,\alpha)$ avoids a thin (infinite)
cylinder%
\[
\mathrm{Cyl\,}(r_{0})=\{(x,y,z)\in{\mathbb{R}}^{3};\,x^{2}+y^{2}=r_{0}
^{2},\,z\in\mathbb{R}\}
\]
containing the $z$-axis. Indeed, taking $r_{0}<<r$ the above cylinder
is very close to the $z$-axis, therefore we obtain (almost) the same result as
before. This is formulated in the next lemma.

\begin{lemma}
\label{smallcylinder} There exists an integer $N\geq2$ such that  
whenever $r=Nr_{0}$ and $\alpha=\arccos{1/\sqrt{5}}$, we
have:
\[
C(\alpha,\tau)\cap\mathrm{Cyl\,}(r_{0})=\emptyset\text{,\quad for all }
\tau\geq0.
\]

\end{lemma}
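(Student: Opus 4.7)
The plan is to exploit the screw-symmetry of the helix to reduce to a single cone at $\tau=0$, and then to extend the previous lemma (which handles the $r_0=0$ case) to small positive $r_0$ by continuity and a growth estimate at infinity.

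First I would exploit the screw motion $\Phi_\theta(x,y,z)=(x\cos\theta-y\sin\theta,\,x\sin\theta+y\cos\theta,\,z+\mu r \theta)$. This is an isometry of ${\mathbb R}^3$ which satisfies $\Phi_\theta(\gamma(t))=\gamma(t+\theta)$, maps $\mathrm{Cyl}(r_0)$ onto itself (rotations preserve the cylinder and vertical translations leave it invariant), and sends the cone $C(\tau,\alpha)$ to $C(\tau+\theta,\alpha)$ because it carries the apex and the forward tangent direction to the new apex and tangent. It therefore suffices to show $C(0,\alpha)\cap\mathrm{Cyl}(r_0)=\emptyset$ for $\tau=0$.

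Next, take $p=(r_0\cos\phi,r_0\sin\phi,z)\in\mathrm{Cyl}(r_0)$ and expand the cone condition. Since $\gamma(0)=(r,0,0)$ and $\gamma'(0)=(0,r,\mu r)$, a direct computation gives that $p \notin C(0,\alpha)$ is equivalent to
\[
\epsilon\sin\phi + \mu s \,\leq\, \frac{\sqrt{1+\mu^2}}{\sqrt 5}\,\sqrt{\epsilon^2-2\epsilon\cos\phi+1+s^2},
\]
where $\epsilon=r_0/r$ and $s=z/r$. Denote the difference (right side minus left side) by $g(\phi,s,\epsilon)$. The previous lemma's computation (setting $\epsilon=0$) gives $g(\phi,s,0)=\tfrac{\sqrt{1+\mu^2}}{\sqrt 5}\sqrt{1+s^2}-\mu s>0$ strictly, since $\mu<1/2$ forces $\mu\sqrt 5<\sqrt{1+\mu^2}$. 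So our task is to find $\epsilon_0>0$ such that $g(\phi,s,\epsilon)\ge 0$ for all $\phi\in[0,2\pi]$, $s\in{\mathbb R}$, and $0\le\epsilon\le\epsilon_0$; then any integer $N\ge 1/\epsilon_0$ works.

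Finally I would split the $(\phi,s)$-analysis into a compact part and a tail. For $|s|\le M$ with $M$ fixed, the set $[0,2\pi]\times[-M,M]$ is compact and $g(\cdot,\cdot,0)$ is bounded below by some $\delta>0$, so by joint continuity in $(\phi,s,\epsilon)$ one has $g(\cdot,\cdot,\epsilon)\ge \delta/2>0$ for $\epsilon$ small enough. For the tail $|s|\ge M$, I would use the crude bounds $|\epsilon\sin\phi|\le\epsilon$ and
\[
\sqrt{\epsilon^2-2\epsilon\cos\phi+1+s^2}\,\ge\,\sqrt{s^2+1-2\epsilon}\,\ge\,|s|-1,
\]
valid for $\epsilon\le 1/2$ and $|s|$ large. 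Since $\frac{\sqrt{1+\mu^2}}{\sqrt 5}-\mu>0$ by the choice of $\mu$, choosing $M$ large (depending only on $\mu$) makes the tail estimate give $g\ge 0$ uniformly in $\phi$ and $\epsilon\in[0,1/2]$. Combining the two regimes yields the required $\epsilon_0$ and hence $N$. The main technical point, and the only place that requires care beyond invoking the previous lemma, is verifying uniform positivity on the unbounded strip; but this is a one-line triangle-inequality estimate using that the slope of the right-hand side in $|s|$ strictly exceeds $\mu$.
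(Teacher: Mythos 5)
Your argument is correct and follows essentially the same route as the paper: reduce to $\tau=0$ by the screw symmetry, expand the cone inequality at $\gamma(0)=(r,0,0)$, normalize by $r$, and show the resulting inequality holds once $N=r/r_0$ is large, using $\mu<1/2\iff\mu\sqrt5<\sqrt{1+\mu^2}$. The only (inessential) difference is in the last step: the paper computes the supremum over the unbounded variable in closed form, namely $\sup_{u\ge0}\frac{1+\mu u}{\sqrt{(N-1)^2+u^2}}=\frac{\sqrt{1+\mu^2(N-1)^2}}{N-1}\to\mu$, whereas you obtain the same uniform bound by a compactness-plus-tail perturbation of the $r_0=0$ case.
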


\noindent\textit{Proof.} We consider again the curve $\gamma$ given by
(\ref{g-31}). Thanks to the symmetry, it is enough to check the assertion for
$\tau=0$. Therefore, for $\sigma(\theta,z)=\left(  r_{0}\cos\theta,r_{0}
\sin\theta,z\right)$, it is enough to verify
\[
\langle\gamma'(0),\sigma(\theta,z)-\gamma(0)\rangle\leq\cos
\alpha\,\left\Vert\gamma'(0)\right\Vert \,\left\Vert \sigma
(\theta,z)-\gamma(0)\right\Vert \qquad\forall\theta\in\lbrack0,2\pi],\,\forall
z\geq0,
\]
where $\gamma(0)=\left(  r,0,0\right)  \ $and $\gamma'(0)=\left(  0,r,\mu
r\right)  .$The above condition reads
\[
rr_{0}\sin\theta+\mu rz\leq\cos\alpha\,\sqrt{r^{2}(1+\mu^{2})}\,\sqrt{\left(
r_{0}\cos\theta-r\right)  ^{2}+r_{0}^{2}\sin^{2}\theta+z^{2}}\quad
\forall\theta\in\lbrack0,2\pi],\,\forall z\geq0.
\]
Dividing by $rr_{0}$, setting $w=z/r_{0}$, and since $\cos\alpha={1/\sqrt{5}}$, we
deduce
\[
\sin\theta+\mu w\leq\frac{1}{\sqrt{5}}\,\sqrt{1+\mu^{2}}\,\sqrt{\left(  \frac{r}{r_{0}
}-1\right)  ^{2}+2\frac{r}{r_{0}}(1-\cos\theta)+w^{2}}.
\]
Setting $r=Nr_{0}$ we obtain the condition
\[
\frac{1}{\sqrt{5}}\geq\frac{1}{\sqrt{1+\mu^{2}}}\,\sup_{\theta\in\lbrack0,2\pi
],w\in\mathbb{R}}\left\{  \frac{\sin\theta+\mu|w|}{\sqrt{(N-1)^{2}
+2N(1-\cos\theta)+w^{2}}}\right\}  .
\]
But for any $\theta\in\lbrack0,2\pi]$, $u=|w|\geq0$
\[
\frac{\sin\theta+\mu u}{\sqrt{(N-1)^{2}+2N(1-\cos\theta)+u^{2}}}\leq
\frac{1+\mu u}{\sqrt{(N-1)^{2}+u^{2}}}
\]
and
\[
\sup_{u\geq0}\left\{  \frac{1+\mu u}{\sqrt{(N-1)^{2}+u^{2}}}\right\}
=\frac{1}{N-1}\,\sqrt{1+\mu^{2}(N-1)^{2}}\longrightarrow\mu\,\,\,\text{ as
}\,\,\,N\rightarrow+\infty.
\]
Since  $\mu\left(  1+\mu^{2}\right)  ^{-1/2}<\left(  \sqrt{5}\right)  ^{-1}$,
we can choose $N$ large enough such that
\[
\frac{\sqrt{1+\mu^{2}(N-1)^{2}}}{(N-1)\sqrt{1+\mu^{2}}}<\frac{1}{\sqrt{5}}.
\]
Therefore, for this choice of $N$, we get
$C(0,\alpha)\cap\mathrm{Cyl\,}(r_{0})=\emptyset$. \hfill$\square$

\bigskip

Let $\gamma$ be given by (\ref{g-31}). We shall now include a further
restriction. We shall show that the cone $C(\tau,\alpha)$ associated to $\gamma$ also
avoids radial segments $S$ of the form:
\[
S=\{(x,0,0);\,0\leq x\leq r\}.
\]
This is the aim of the following lemma.

\begin{lemma}
\label{forward} If $\lambda=\frac{1}{\sqrt{5}}$ and $\alpha=\arccos(\lambda),$
then $C(\tau,\alpha)\cap S=\emptyset$ for all $\tau\geq0$.
\end{lemma}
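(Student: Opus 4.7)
My approach is to reduce Lemma~\ref{forward} to the self-expanded character of $\gamma$ that was recorded in Lemma~\ref{selfexpanded}, rather than setting up a fresh geometric argument. The key point is that for $p=(x,0,0)\in S$ the rotational cross-terms in $\langle\gamma'(\tau),p-\gamma(\tau)\rangle$ cancel, leaving an expression whose non-positivity is precisely the self-expanded inequality. In particular, the full value $\cos\alpha=1/\sqrt{5}$ never needs to be used; only $\alpha<\pi/2$ plays a role.

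Concretely, I would first compute
\[
\gamma'(\tau)=(-r\sin\tau,\,r\cos\tau,\,\mu r),\qquad p-\gamma(\tau)=(x-r\cos\tau,\,-r\sin\tau,\,-\mu r\tau),
\]
and expand the scalar product. The two $r^{2}\sin\tau\cos\tau$ cross-terms cancel, leaving the clean expression
\[
\langle\gamma'(\tau),p-\gamma(\tau)\rangle=-rx\sin\tau-\mu^{2}r^{2}\tau.
\]
Second, self-expandedness of $\gamma$ at base time $0$ (applied for $\tau\ge 0$ via the symmetry $\|\gamma(\tau)-\gamma(0)\|=\|\gamma(-\tau)-\gamma(0)\|$) rewrites as $\sin\tau+\mu^{2}\tau\ge 0$ for $\tau\ge 0$, i.e.\ $-\sin\tau\le\mu^{2}\tau$. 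When $\sin\tau\ge 0$, both terms in the above inner product are non-positive. When $\sin\tau<0$, I would combine $x\le r$ with this bound to obtain
\[
-rx\sin\tau=rx(-\sin\tau)\le r^{2}(-\sin\tau)\le\mu^{2}r^{2}\tau,
\]
which cancels the second term exactly. In either case $\langle\gamma'(\tau),p-\gamma(\tau)\rangle\le 0$. Finally, since $\alpha<\pi/2$, the cone $C(\gamma'(\tau)/\|\gamma'(\tau)\|,\alpha)$ minus its apex lies in the open half-space $\{u:\langle u,\gamma'(\tau)\rangle>0\}$, so any nonzero vector with non-positive inner product against $\gamma'(\tau)$ is excluded from it. Thus $p\notin C(\tau,\alpha)$ whenever $p\ne\gamma(\tau)$, and the apex case is automatic for $\tau>0$ since then $\gamma(\tau)\notin S$ (its $z$-coordinate is $\mu r\tau\neq 0$).

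The main obstacle I would expect is psychological: one might try to attack the full cone inequality $\langle\gamma'(\tau),p-\gamma(\tau)\rangle\le\cos\alpha\,\|\gamma'(\tau)\|\,\|p-\gamma(\tau)\|$ head-on and launch into a supremum computation in $(x,\tau)$ analogous to Lemma~\ref{smallcylinder}. The observation that trivializes everything is that the left-hand side is already non-positive, after which only the bound $-\sin\tau\le\mu^{2}\tau$ (plus $x\le r$) is needed. So this lemma is really a corollary of the self-expanded character of the helix.
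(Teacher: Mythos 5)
Your proof is correct and follows essentially the same route as the paper: both compute $\langle\gamma'(\tau),(x,0,0)-\gamma(\tau)\rangle=-rx\sin\tau-\mu^{2}r^{2}\tau$, and both reduce non-positivity of this quantity to the bound $\sin\tau+\mu^{2}\tau\geq 0$ inherited from Lemma~\ref{selfexpanded} together with $0\leq x\leq r$ (the paper packages your two cases into the single estimate $-x\sin\tau-\mu^{2}r\tau\leq -x(\sin\tau+\mu^{2}\tau)\leq 0$). Your closing observation that the value $\cos\alpha=1/\sqrt{5}$ is never used, only $\alpha<\pi/2$, matches the paper's remark that the inequality is ``clearly satisfied'' once the left-hand side is non-positive.
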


\noindent\textit{Proof.} It is enough to verify
\[
\langle\gamma'(\tau),(x,0,0)-\gamma(\tau)\rangle\leq\cos\alpha
\,\left\Vert \dot{\gamma}(\tau)\right\Vert \,\left\Vert (x,0,0)-\gamma
(\tau)\right\Vert ,\qquad\text{for all}\,\,0\leq x\leq r,
\]
where
\[
\gamma(\tau)=\left(  r\,\cos\tau,\,r\,\sin\tau,\,\mu r\tau\right)
\quad\mbox{and}\quad\dot{\gamma}(\tau)=\left(  -r\sin\tau,\,r\cos\tau,\,\mu
r\right)  .
\]
Setting $\lambda=\cos\alpha$ and simplifying by $r$, we obtain for all $0\leq
x\leq r$
\begin{equation}
-x\sin\tau-\mu^{2}r\tau\,\leq\,\lambda\,\sqrt{1+\mu^{2}}\,\sqrt{(x-r\cos
\tau)^{2}+r^{2}\sin^{2}\tau+\mu^{2}r^{2}\tau^{2}},\label{dev}
\end{equation}

Notice that $\mu$ satisfies $\sin\tau+\mu^{2}\tau>0$ for every $\tau\geq0$. Therefore,
\[
-x\sin\tau-\mu^{2}r\tau\leq-x(\sin\tau+\mu^{2}\tau)\leq0,
\]
so (\ref{dev}) is clearly satisfied.\hfill$\square$

\bigskip

\subsection{Arbitrary long eels inside a bounded cylinder}

We are now ready to construct arbitrarily long $\lambda$-eels lying inside
the following bounded cylinder:
\begin{equation}
\mathrm{Cyl\,}(r,[a,a+2\pi\mu r]):=\{(x,y,z)\in{\mathbb{R}}^{3};\,x^{2}
+y^{2}=r^{2},\,a\leq z\leq a+2\pi\mu r\}.\label{b-cyl}
\end{equation}
Indeed we have the following result.

\begin{proposition}
\label{ge1} Let $\lambda\geq1/\sqrt{5}$ and
let $\mathrm{Cyl\,}(r,[a,a+2\pi\mu r])$ be the bounded cylinder defined in
(\ref{b-cyl}). Then there exists a $\lambda$-eel
\[
\gamma:I\longmapsto\mathrm{\mathrm{Cyl\,}}(r,[a,a+2\pi\mu r])
\]
whose length is greater than $1$. Moreover, the initial point of $\gamma
\ $lies in the upper part of the cylinder ($z=a+2\pi\mu r$) while the last
point lies at the bottom ($z=a$).
\end{proposition}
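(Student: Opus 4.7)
The construction of $\gamma$ will be explicit and recursive: start at the top of the outer cylinder, $(r,0,a+2\pi\mu r)$, and follow one descending turn of the helicoid of Lemma~\ref{selfexpanded}, arriving at $(r,0,a)$; then move inward along the radial segment to $(r/N,0,a)$ (with $N$ the integer provided by Lemma~\ref{smallcylinder}); then follow an ascending turn of the rescaled helicoid on the cylinder of radius $r/N$; then jump inward along a radial segment to radius $r/N^{2}$; then a descending turn; and continue iteratively, alternating the vertical direction of the successive helicoidal pieces on cylinders of geometrically decreasing radii $r_{k}=r/N^{k}$. After an odd number of helicoidal pieces (and possibly a terminal radial segment back out to $z=a$ on the outer surface), the endpoint has $z=a$, as required. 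The curve may be slightly smoothed at each of the finitely many corners to make it continuous and right-differentiable throughout.

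Verification of the $\lambda$-cone property is then done piece by piece using the three preceding lemmas. Within a single helicoidal piece, Lemma~\ref{selfexpanded} in fact gives self-expandedness, hence the stronger $\lambda$-cone property. At a point of the helix on the cylinder of radius $r_{k}$, the remaining \emph{future} portion of $\gamma$ consists of radial segments and helices on cylinders of smaller radii: Lemma~\ref{forward} excludes the former from the forward cone (after translating and reflecting coordinates so the relevant segment matches the one in the lemma), and Lemma~\ref{smallcylinder} excludes the latter, since every inner cylinder of radius $r_{k+1},r_{k+2},\ldots$ is nested in the $r_{k}/N$-cylinder handled by that lemma. At a point on a radial segment the forward direction is radial and the past pieces, all lying at strictly larger radii, fall outside the forward cone by a direct comparison. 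The delicate case is the forward cone of a point on an \emph{inner} helix avoiding the \emph{outer} helical pieces, which form its past there; this case is not covered directly by Lemma~\ref{smallcylinder} (which gives only the opposite inclusion), so it has to be verified by a dedicated algebraic estimate in the same spirit as Lemma~\ref{smallcylinder}, relying on the factor $N$ between consecutive radii to produce enough angular separation.

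The length of the constructed curve is a geometric sum $\sum_{k\ge 0} 2\pi r_{k}\sqrt{1+\mu^{2}}+\sum_{k\ge 1}(r_{k-1}-r_{k})$, a fixed multiple of $r$ depending only on $\mu$ and $N$; this already gives the required lower bound for $r$ above a suitable threshold. For smaller $r$, each single-turn helix can be replaced by a multi-turn helix of angular speed $\omega$ sufficiently close to $1$, which remains admissible for the $\lambda$-cone property by a continuity argument from the $\omega=1$ case of Lemma~\ref{selfexpanded}; this multiplies the length at each scale and, combined with taking enough nested levels, makes $\ell(\gamma)>1$. The main obstacle of the proof is precisely the ``inner helix vs.\ outer helix'' forward-cone check, because the preceding lemmas give cone-avoidance only in one direction ($r$ against $r/N$) and not in the reverse direction; extracting the required estimate will demand either enlarging the integer $N$ (making the scales very far apart) or a direct trigonometric computation analogous to the proof of Lemma~\ref{smallcylinder} with the roles of the two cylinders interchanged.
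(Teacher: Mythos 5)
There is a genuine and, I believe, fatal gap: your construction is nested in the \emph{wrong order}. You start on the outer cylinder of radius $r$ and move inward to radii $r/N, r/N^{2},\dots$, so that at a point of an inner helix the \emph{past} of the curve consists of the outer helical pieces. You correctly identify that none of the lemmas covers this configuration, but the ``dedicated algebraic estimate'' you defer to does not exist: take a point $P=(\rho,0,z_{0})$ on a helix of small radius $\rho\ll r$, with forward direction proportional to $(0,1,\mu)$, and a point $Q=(0,r,z_{1})$ where the outer helix crosses the positive $y$-axis at a comparable height $z_{1}\approx z_{0}$. Then
\[
\Big\langle \tfrac{(0,1,\mu)}{\sqrt{1+\mu^{2}}},\tfrac{Q-P}{\|Q-P\|}\Big\rangle \;\approx\; \frac{r}{\sqrt{1+\mu^{2}}\,\sqrt{\rho^{2}+r^{2}}}\;\longrightarrow\;\frac{1}{\sqrt{1+\mu^{2}}}\;>\;\frac{2}{\sqrt{5}}\;>\;\lambda
\]
as $\rho/r\to 0$, so enlarging $N$ (separating the scales further) makes the violation \emph{worse}, not better. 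The forward cone of half-aperture $\arccos(1/\sqrt5)\approx 63^{\circ}$ emanating from near the axis sweeps out a large patch of the surrounding cylinder at comparable heights, and the outer turn sitting in the past will generically meet it. The paper's construction runs in the opposite direction: it starts on the \emph{innermost} cylinder $C_{1}$ of radius $r/N^{n-1}$ and concatenates outward through $C_{2},\dots,C_{n}$, so that the past at any time lies in strictly thinner coaxial cylinders, which is exactly the situation controlled by Lemma~\ref{smallcylinder} (spiral cone avoids the thin inner cylinder) and Lemma~\ref{forward} (spiral cone avoids the inward radial segment). Reversing the nesting is not a cosmetic change; it is the whole point of the argument.

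A secondary problem is the length accounting. With one turn per level on radii $r/N^{k}$ the total length is $O(r)$, and a single turn on radius $r/N^{k}$ climbs only $2\pi\mu r/N^{k}$, so your ascending pieces do not even return to the top of the bounding cylinder and the concatenation does not close up geometrically. The paper instead has each spiral on radius $r/N^{n-k}$ make $N^{n-k}$ full turns so that every piece traverses the entire height $2\pi\mu r$; with $n$ such traversals the length exceeds $2\pi\mu rn>1$ for $n$ large. Your proposed fix of perturbing the angular speed $\omega$ changes the pitch and hence the inequality $\sin t+\mu^{2}t\le 0$ underlying Lemma~\ref{selfexpanded}, so it cannot be justified by ``continuity from $\omega=1$'' without redoing that lemma.
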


\noindent\textit{Proof.} Without loss of generality, we assume $a=0$.
Below, $N$ is a fixed integer given by Lemma~\ref{smallcylinder}. Let us
fix an odd integer $n$ such that $2\pi\mu rn>1$. Then for $1\leq k\leq n$, we
define internal cylinders
\[
C_{k}:=\mathrm{\mathrm{Cyl\,}}(\frac{r}{N^{n-k}},[0,2\pi\mu r])=\{(x,y,z)\in
{\mathbb{R}}^{3};\,x^{2}+y^{2}=\left(  \frac{r}{N^{n-k}}\right)  ^{2},\,0\leq
z\leq2\pi\mu r\}.
\]
For $k=2\ell+1\leq n$ (odd) we define a downward spiral curve $\gamma
_{k}^{\downarrow}$ as follows:
\[
\gamma_{k}^{\downarrow}(t)=\frac{r}{N^{n-k}}\bigl(
\cos(t),\,  \sin(t),\,\mu(2\pi N^{n-k}-t)\bigr),\qquad\text{ for }0\leq t\leq2\pi
N^{n-k}.
\]
while for $k=2\ell\leq n$ (even) we define an upward spiral curve $\gamma
_{k}^{\uparrow}$ as follows:
\[
\gamma_{k}^{\uparrow}(t)=\frac{r}{N^{n-k}}\bigl(
\cos(t),\,  \sin(t),\,\mu t\bigr),\qquad\text{for }0\leq t\leq2\pi N^{n-k}.
\]
Notice that if $k$ odd,
\[
\gamma_{k}^{\downarrow}(0)=(\frac{r}{N^{n-k}},0,2\pi\mu r)\quad\text{and}
\quad\gamma_{k}^{\downarrow}(2\pi N^{n-k})=(\frac{r}{N^{n-k}},0,0),
\]
while for $k$ even
\[
\gamma_{k}^{\uparrow}(0)=(\frac{r}{N^{n-k}},0,0)\quad\text{and}\quad\gamma
_{k}^{\uparrow}(2\pi N^{n-k})=(\frac{r}{N^{n-k}},0,2\pi\mu r).
\]
Each spiral $\gamma_{k}$ lies on the surface of the cylinder $C_{k}$ and makes
$N^{n-k}$ loops to reach the upper part of the cylinder starting from the
bottom and going upwards if $k$ is even (respectively, to reach the bottom,
starting from the upper part and going downward, if $k$ is odd). We finally
define parametrized segments $e_{k}^{+}$ joining the end point of $\gamma_{k}^{\downarrow}$ 
to the initial point of $\gamma_{k}^{\uparrow}$ (for $k=2\ell+1$), and respectively $e_{k}^{-}$ 
joining the end point of $\gamma_{k}^{\uparrow}$ to the initial point of $\gamma_{k+1}^{\downarrow}$ (for
$k=2\ell$), that is:
\[
e_{k}^{+}(t)=\left(\frac{r}{N^{n-k}}(1+t(N-1)),0,2\pi\mu r\right)
\,\text{ and }\,\text{ }e_{k}^{-}(t)=\left(\frac{r}{N^{n-k}
}(1+t(N-1)),0,0\right)  ,\quad t\in\lbrack0,1].
\]

\begin{figure}[h!]
  \includegraphics[width=0.6\textwidth]{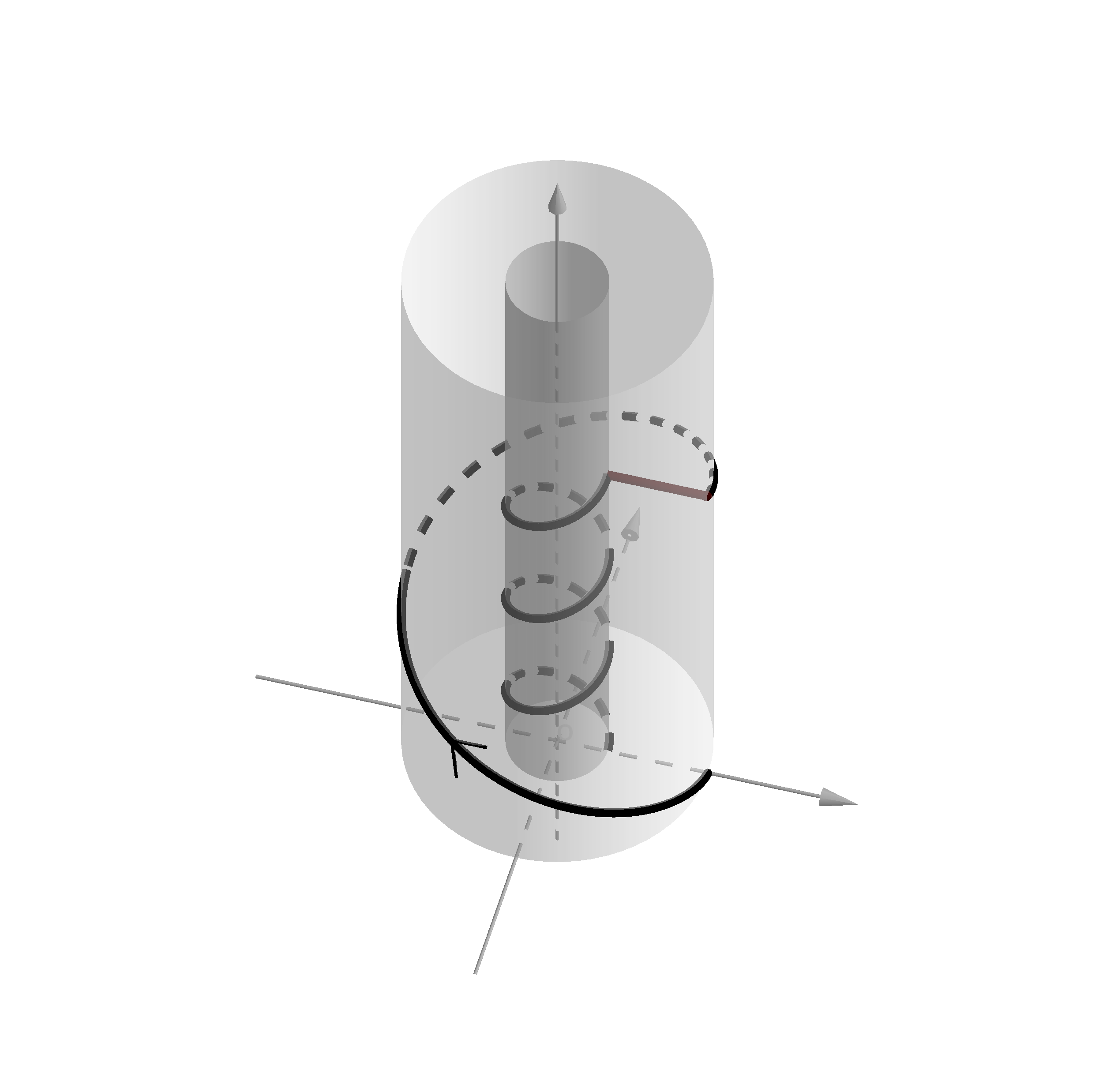}
 \caption{A block of the construction}
 \label{circletangent}
 \end{figure}

The curve $\gamma$ will now be defined concatenating the above curves: we
start with $k=1$ and the downward spiral $\gamma_{1}^{\downarrow}$ and we
concatenate with the segment $e_{1}^{-}$. We continue with the upward spiral
$\gamma_{1}^{\uparrow}$ and the segment $e_{1}^{+}$ and concatenate with
$\gamma_{2}^{\downarrow}$ ($k=2$), then the segment $e_{2}^{-}$ and so on, up
to the final downward spiral $\gamma_{n}^{\downarrow}$. The resulting curve is
clearly continuous. Applying Lemma~\ref{smallcylinder} and Lemma~\ref{forward}
we deduce that $\gamma$ is a $\lambda$-eel. The length of $\gamma$ is clearly greater than~$1$
since we cross $n$ times the cylinder of length $2\pi\mu r$ (and we have taken $n\ge2$ such that $2\pi\mu rn>1$). \hfill$\square$

\bigskip

\begin{remark}
\label{type} Proposition~\ref{ge1} ensures, by rescaling, that we can construct
arbitrarily long $\lambda$-eels inside arbitrarily small cylinders. The
$\lambda$-eel $\gamma$ inside the cylinder
$\mathrm{\mathrm{Cyl\,}}(r,[a,b])$, where $b=a+2\pi\mu r,$ 
is obtained by concatenating pieces of
three different types:

\smallskip\noindent- Type 1: a spiral going downward: $\gamma_i^{\downarrow
}(t)=(\rho\cos(t),\rho\sin(t),b-\rho\mu t)$,

\smallskip\noindent- Type 2: a spiral going upward: $\gamma_i^{\uparrow
}(t)=(\rho\cos(t),\rho\sin(t),a+\rho\mu t)$,

\smallskip\noindent- Type 3: a segment parametrized by $e_i^{-}(t)=(t,0,a)$ or
by $e_i^{+}(t)=(t,0,b)$.
\end{remark}

\begin{remark}
It is possible to modify slightly the above construction to get a $\lambda$-ell (with $\lambda=1/\sqrt{5}$)
$\gamma:(-\infty,0]\to\mathrm{Cyl\,}(r,[a,a+2\pi\mu r])$, with infinite length,
and such that $\lim\limits_{t\to -\infty}\gamma(t)$ does not exist.
Since the curve constructed $\gamma$ above depends on the parameter $n$,
let us denote it $\gamma_n$. We can assume, without loss of generality, by choosing a suitable parametrization, 
that $\gamma$ is defined on $[-n,0]$ and that for each $n$,
the restriction of $\gamma_{n+1}$ to  $[-n,0]$ coincides with $\gamma_n$.
Now we define $\gamma$ on $(-\infty,0]$, satisfying, for each $n$, $\gamma_{\vert[-n,0]}=\gamma_n$.
Since each $\gamma_n$ is a $\lambda$-ell, it is clear that $\gamma$ is  a $\lambda$-ell.
Morover, the $z$-coordinate of $\gamma(t)$ oscillates infinitely many times between $a$ and $a+2\pi\mu r$.
This shows both that $\gamma$ has  infinite length
and that $\lim\limits_{t\to -\infty}\gamma(t)$ does not exist.
\end{remark}

\subsection{Constructing bounded eels of infinite length in 3D}

To construct a bounded $\lambda$-eel with infinite length, we need to glue
together curves of length greater than $1$ (constructed in the previous
subsection) that lie each time in prescribed disjoint bounded cylinders, all
taken along the $z$-axis, of the form $C_{n}:=\mathrm{Cyl\,}(r_{n}
,[a_{n},b_{n}])$ with $a_{n}>b_{n+1}$ and $r_{n}\searrow0^{+}$. To construct
efficiently such a curve, and to establish that it is a $\lambda$-eel, we
shall need the following result, asserting that a $\lambda$-eel lying in a
small cylinder does not see a bigger remote cylinder of the same axis.

\begin{lemma}
\label{bigcylinder} Let $\lambda=1/\sqrt{5}$, $\alpha=\arccos(\lambda),$
and let us set
\[
\mathrm{Cyl\,}(R,[a,b]):=\{(x,y,z)\in{\mathbb{R}}^{3};\,x^{2}+y^{2}\leq
R,\,a\leq z\leq b\}.
\]
Then there exists $M>1$ such that, for every $r\in(0,R/2)$ and $a^{\prime
},b^{\prime}\in\mathbb{R}$ such that
\[
a^{\prime}<b^{\prime}<a<b\text{ \quad and\quad\ }b^{\prime}-a^{\prime}\geq MR
\]
the curve $\gamma:[a^{\prime},b^{\prime}]\rightarrow{\mathbb{R}}^{3}$
with equation $\gamma(t)=\left(  r\,\cos t,\,r\,\sin t,\,\mu rt\right)  $,
satisfies
\[
C(\tau,\alpha)\cap\mathrm{Cyl\,}(R,[a,b])=\emptyset.
\]

\end{lemma}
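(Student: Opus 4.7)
The plan is to reduce the disjointness assertion to a single algebraic inequality via rotational symmetry, and then to close that inequality with a crude estimate that exploits the strict margin afforded by $\mu<1/2$ (Lemma~\ref{selfexpanded}). Fix $\tau\in[a',b']$ and set $z_{\tau}:=\mu r\tau$. Since $\mathrm{Cyl}(R,[a,b])$ is invariant under rotations about the $z$-axis, rotating by $-\tau$ brings us to the canonical configuration $\gamma(\tau)=(r,0,z_{\tau})$, $\gamma'(\tau)=(0,r,\mu r)$, without altering the cylinder. A direct computation yields $\langle\gamma'(\tau),p-\gamma(\tau)\rangle=r[y+\mu(z-z_{\tau})]$ and $\Vert\gamma'(\tau)\Vert=r\sqrt{1+\mu^{2}}$; dividing by $r>0$, the target disjointness $C(\tau,\alpha)\cap\mathrm{Cyl}(R,[a,b])=\emptyset$ becomes
\[
y+\mu(z-z_{\tau})\;\leq\;\tfrac{1}{\sqrt{5}}\sqrt{1+\mu^{2}}\,\sqrt{(x-r)^{2}+y^{2}+(z-z_{\tau})^{2}}
\]
for every $(x,y,z)$ with $x^{2}+y^{2}\leq R^{2}$ and $a\leq z\leq b$.

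The key positivity is $\mu<\tfrac{1}{\sqrt{5}}\sqrt{1+\mu^{2}}$, equivalent to $5\mu^{2}<1+\mu^{2}$, which holds because $\mu<1/2$; set $\eta:=\tfrac{1}{\sqrt{5}}\sqrt{1+\mu^{2}}-\mu>0$, a constant depending only on $\mu$. Writing $d:=z-z_{\tau}$ and using the crude bounds $|y|\leq R$ (from $y^{2}\leq x^{2}+y^{2}\leq R^{2}$) and $\sqrt{(x-r)^{2}+y^{2}+d^{2}}\geq d$ (valid whenever $d\geq 0$), the left-hand side is at most $R+\mu d$ and the right-hand side is at least $(\mu+\eta)d$. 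Hence the displayed inequality holds as soon as $R+\mu d\leq(\mu+\eta)d$, i.e.\ $d\geq R/\eta$.

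It remains to turn the quantitative hypotheses into this uniform lower bound on $d$. The worst case over $\tau\in[a',b']$ and $z\in[a,b]$ is $d=a-\mu rb'$, attained at $\tau=b'$, $z=a$; using $r<R/2$, $\mu<1/2$, $b'<a$ and $b'-a'\geq MR$, one picks $M$ sufficiently large (as a function of $\mu$ only) so that $a-\mu r b'\geq R/\eta$, which fixes the universal constant $M$ in the statement. The only substantive ingredient is the strict inequality $\mu<1/2$, whose geometric content is $\alpha+\arctan\mu<\pi/2$: the forward cone of the helicoid, although tilted upward, cannot reach arbitrarily high above its apex, and this positive margin is precisely what prevents the cone from touching the distant cylinder. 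Everything else is a rotational reduction and elementary algebra; the main obstacle is recognizing that the parameter $\mu$ fixed in Lemma~\ref{selfexpanded} gives the sharp margin $\eta>0$ without which the crude estimates would fail.
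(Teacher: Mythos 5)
Correct, and essentially the paper's own argument: the same rotational reduction to the canonical frame at $\tau$, the same key margin $\mu<\tfrac{1}{2}\Leftrightarrow\mu<\tfrac{1}{\sqrt5}\sqrt{1+\mu^{2}}$, the same crude bounds (numerator at most $R+\mu d$, denominator at least the vertical offset $d$), and the same conclusion once $d$ exceeds a fixed multiple of $R$. The one shaky point --- that $a-\mu rb'\ge R/\eta$ should follow from the stated hypotheses --- is equally shaky in the paper, whose proof likewise invokes $a\ge MR$ after normalizing $b'=0$; what both arguments actually require (and what the application in the proof of Theorem~\ref{main} really supplies, via $\ell_{n}\ge Mr_{n}$) is that the vertical gap between the top of the spiral and the bottom of the remote cylinder be at least $MR$, rather than the literal condition $b'-a'\ge MR$.
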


\noindent\textit{Proof.} Without loss of generality, we can assume $b^{\prime
}=0$. The equation of the spiral $\gamma:[0,\infty)\rightarrow{\mathbb{R}}
^{3}$ is of the form
\[
\gamma(t)=\left(  r\,\cos t,\,r\,\sin t,\,\mu rt\right)  ,\qquad a^{\prime
}\leq t\leq0,
\]
where $\mu<1/2$ is given by Lemma~\ref{selfexpanded}. 
Set
\[
\sigma(\theta,z,u)=\left(  u\cos\theta,u\sin\theta,z\right)  ,\quad
\gamma(0)=\left(  r,0,0\right)  \quad\mbox{and}\quad\gamma'(0)=\left(
0,r,\mu r\right)  .
\]
It is enough to check that
\[
\langle\gamma'(0),\sigma(\theta,z,u)-\gamma(0)\rangle\leq\cos
\alpha\,\left\Vert\gamma'(0)\right\Vert \,\left\Vert \sigma
(\theta,z,u)-\gamma(0)\right\Vert \qquad\forall\theta\in\lbrack0,2\pi
],\,\forall z\in\lbrack a,b],\,\forall u\in\lbrack0,R].
\]
The above condition reads, for all $\theta
\in\lbrack0,2\pi]$, for all $z\in\lbrack a,b]$ and for all $u\in\lbrack0,R]$,
\[
ru\sin\theta+\mu rz\leq\frac{1}{\sqrt{5}}\,\sqrt{r^{2}(1+\mu^{2})}\,\sqrt{\left(
u\cos\theta-r\right)  ^{2}+u^{2}\sin^{2}\theta+z^{2}}.
\]
So it is enough to check that for all $z\in\lbrack
a,b]$ and $u\in\lbrack0,R]$ it holds
\[
u+\mu z\leq\frac{1}{\sqrt{5}}\,z\sqrt{1+\mu^{2}}.
\]
In order to do that, let us fix the value of $M$. 
For $\mu<1/2$, we have $\sqrt{1+\mu^{2}}>\sqrt{5}\mu$.
Therefore, we can choose $M>0$ such that $\sqrt{1+\mu^{2}}>\left(  \mu+\frac
{1}{M}\right)  \sqrt{5}$.
Now for all $u\leq R$ and $z\geq a$ we
have
\[
\frac{u+\mu z}{z}\leq\frac{R}{a}+\mu\leq\frac{1}{M}+\mu
<\frac{\sqrt{1+\mu^{2}}}{\sqrt{5}},
\]
This completes the proof of the lemma. \hfill$\square$

$\bigskip$

We are now ready to prove Theorem~\ref{main}, that is, given $\lambda
={\frac{1}{\sqrt{5}}}$, we construct a continuous curve $\gamma
:[0,+\infty]\rightarrow{\mathbb{R}}^{3}$ of infinite length, lying in the unit
ball, with nonzero right derivative at each point and satisfying the $\lambda
$-cone property ($\lambda$-eel).

\bigskip

\noindent\textit{Proof of Theorem~\ref{main}.}
We claim that we can construct a sequence of disjoint bounded cylinders
$$
C_{n}=\mathrm{Cyl\,}(r_{n},[a_{n},a_{n}+2\pi\mu r_n]),\quad n\ge 1
$$ 
along the $z$-axis, such
that $a_{n}\in\lbrack0,1)$, $r_{n+1}\leq r_{n}/2$,
$\ell_{n}:=a_{n}-(a_{n+1}+2\pi\mu r_{n+1})>0$ and $\ell_{n}/r_{n}$ is sufficiently
big to ensure that the cylinder $C_{n}$ is not seen by any $\lambda$-eel lying
in a (smaller) cylinder $C_{m}$ for $m>n$ (\textit{c.f.} Lemma
\ref{bigcylinder}). More precisely, we define $a_{0}=0$, and, for $n\geq1$,
\[
a_{n}=2^{-n}\text{\quad and\quad}r_{n}=\frac{1}{2^{n+1}(\pi\mu+M)},
\]
where $M>0$ is given by Lemma~\ref{bigcylinder}. Let us check that the
conditions of Lemma~\ref{bigcylinder} are fulfilled for the cylinders $C_{n}$
(big remote cylinder) and $C_{n+1}$ (small cylinder):
\[
\ell_{n}=a_{n}-(a_{n+1}+2\pi\mu r_{n+1})=\frac{1}{2^{n+1}}-\frac{2\pi\mu
}{2^{n+2}(\pi\mu+M)}=\frac{M}{2^{n+1}(\pi\mu+M)}\geq Mr_{n}.
\]
Now the construction is as follows. For each $n$, let $\gamma_{n}$ be the
$\lambda$-eel given by Proposition~\ref{ge1}, of length greater than $1$ lying
inside the cylinder $C_{n}$, entering this cylinder from the upper part
($z=a_{n}+2\pi\mu r_{n}$) and having its endpoint at the bottom ($z=a_{n}$).
Let $\tilde{e}_{n}$ be the oriented segment going from the endpoint of the
curve $\gamma_{n}$ (bottom of the cylinder $C_{n}$) to the starting point of
$\gamma_{n+1}$ (upper part of the cylinder $C_{n+1}$). We now define
$\gamma:[0,+\infty)\rightarrow{\mathbb{R}}^{3}$ by concatenation of the
following curves : $\gamma_{1}$, $\tilde{e}_{1}$, $\gamma_{2}$, $\tilde{e}
_{2}$, and so on. It is clear that $\gamma$ is continuous and has right
derivative at each point. Morever, $\gamma$ is contained in the unit ball of
${\mathbb{R}}^{3}$ and its length $\ell(\gamma)$ is greater than $\ell
(\gamma_{1})+\ell(\gamma_{2})+\cdots+\ell(\gamma_{n})\geq n$ for every $n$,
therefore it is infinite. Observe that $\gamma(t)$ has limit $0$ as
$t\rightarrow+\infty$. It remains to prove that $\gamma$ is a $\lambda$-eel,
that is, it satisfies the $\lambda$-cone condition. Notice that each curve
$\gamma_{n}$, $\tilde{e}_{n}$ is individually a $\lambda$-eel (that is, it satisfies the $\lambda$-cone
property with respect to itself). Provided $M$ is sufficiently big, the segment $\tilde{e}_{n}$ 
is almost parallel to the $z$-axis and it is oriented to the opposite direction of the previous curves $\gamma_{1}$,
$\tilde{e}_{1}$,$\cdots$, $\gamma_{n}$. Therefore, if the $\lambda$-cone $C(t,\alpha)$, given in \eqref{simple}, has its origin onto a segment $\tilde{e}_{n}$, then it does not meet the union of the ranges of $\gamma_{1}$, $\tilde{e}_{1}$,$\cdots$, $\gamma_{n}$. It remains to treat the case where $C(t,\alpha)$ has its origin to a curve of the form $\gamma_{n}$. These curves are constructed (for each $n$) by concatenating pieces of the form $\gamma_{i}^{\downarrow}$ (of type~1), $\gamma
_{i}^{\uparrow}$ (of type~2) and $e_{i}^{+}$ or $e_{i}^{-}$ (of type~3) (\textit{c.f.} Remark~\ref{type}).
If the $\lambda$-cone lies on a piece of type~1 or of type~3 of $\gamma_n$, then it is oriented to the opposite directions of all of the previous pieces $\gamma_{1}$, $\tilde{e}_{1}$,$\cdots$, $\gamma_{n-1}$, $\tilde{e}_{n-1}$ of $\gamma$, therefore it does not meet the union of their ranges. If now the cone $C(t,\alpha)$ has its origin on an upward piece $\gamma_{i}^{\uparrow}$ (type~2) of the curve $\gamma_{n}$, then the result follows
from Lemma~\ref{bigcylinder}. The proof is complete. \hfill$\square$

\section{Curves with the $\lambda$-cone property in 2 dimensions}

It is remarkable that there is no analogue of the construction in Theorem~\ref{main} in dimension~2. Indeed, we shall show that for any value of the parameter $\lambda \in \lbrack -1,1)$, any bounded planar $\lambda $-eel
(that is, continuous curve with right derivative at each point that satisfies the $\lambda $-cone property) is rectifiable and has finite
length. We shall need the following lemmas. (Recall $\alpha =\arccos(\lambda )$.)

\begin{lemma}
\label{lemma-snake-1}Let $\gamma :I\rightarrow {\mathbb{R}}^{2}$ be a planar 
$\lambda $-eel and $t_{1}<t_{2}<t_{3}$ in $I.$ Then 
\begin{equation*}
\gamma (t_{3})\notin \lbrack \gamma (t_{1}),\gamma (t_{2})].
\end{equation*}
\end{lemma}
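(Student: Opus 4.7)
The plan is to argue by contradiction. Suppose $\gamma(t_3)\in[\gamma(t_1),\gamma(t_2)]$, and after a rigid motion place $\gamma(t_1)=(0,0)$, $\gamma(t_2)=(d,0)$ with $d>0$, and $\gamma(t_3)=(b,0)$ with $b\in[0,d]$. The argument then splits according to whether $b$ lies strictly inside $(0,d)$ or on the boundary of $[0,d]$.

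I first treat the main case $b\in(0,d)$. The key preliminary observation is that applying the $\lambda$-cone property at $t_3$ twice --- once with $u=t_1$ and once with $u=t_2$ --- yields the two opposite constraints $\langle q^+_{t_3},(-1,0)\rangle\le\lambda$ and $\langle q^+_{t_3},(1,0)\rangle\le\lambda$. Together these force the \emph{mostly vertical} property $|\langle q^+_{t_3},(1,0)\rangle|\le\lambda$, so every forward secant at $t_3$ has $y$-component of magnitude at least $\sqrt{1-\lambda^2}>0$. I then distinguish whether $\gamma\vert_{[t_2,t_3]}$ is contained in the $x$-axis or not.

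If $\gamma\vert_{[t_2,t_3]}$ is contained in the $x$-axis, the $\lambda$-cone property at $t_2$ with $u=t_1$ rules out $q^+_{t_2}=(-1,0)$ (which would give inner product $1>\lambda$), hence $q^+_{t_2}=(1,0)$: the curve initially moves to the right of $\gamma(t_2)$. Since it must eventually reach $(b,0)$ with $b<d$, the continuous function $s\mapsto x(s)$ attains its maximum at some interior $s_0\in(t_2,t_3)$ with $x(s_0)\ge d$, and right-differentiability (together with $y(s_0)=0$) forces $q^+_{s_0}=(-1,0)$. Applying the cone property at $s_0$ with $u=t_2$ (or with $u=t_1$ in the degenerate sub-case $\gamma(s_0)=\gamma(t_2)$) yields an inner product equal to $1$, contradicting $\lambda<1$. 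If, on the contrary, $\gamma\vert_{[t_2,t_3]}$ leaves the $x$-axis, I use the reflection symmetry of the $\lambda$-cone property to assume the curve enters the upper half-plane, set
\[
\tau := \sup\{s\in[t_2,t_3) : y(s)\le 0\},
\]
and carry out a case analysis on the positions and signs of the relevant forward and backward secants at $\tau$ and $t_3$. Combining the mostly vertical constraint at $t_3$ with the additional constraint $\langle q^+_{t_3},q^-_{t_3}\rangle\le\lambda$ (obtained by passing to the limit $u\nearrow t_3^-$ in the cone property) and with the cone property at $\tau$ applied to $u=t_1,t_2$ leads to the desired contradiction.

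The boundary cases $b=0$ and $b=d$ correspond to the self-intersections $\gamma(t_3)=\gamma(t_1)$ and $\gamma(t_3)=\gamma(t_2)$ respectively, and are handled by a similar but more direct combination of the cone properties at $t_2$ and $t_3$. The main technical obstacle is the ``leaves the $x$-axis'' sub-case above, since in principle the curve can meander in the upper half-plane before descending to $\gamma(t_3)$; the contradiction here essentially uses the planarity of the ambient space, as illustrated by the fact that the analogous statement fails in dimension three (Example~\ref{ex3dcurve}).
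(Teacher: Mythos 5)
Your reduction to the case $b\in(0,d)$, the ``mostly vertical'' observation at $t_3$ (which, incidentally, already finishes the interior case when $\lambda<0$), and the sub-case where $\gamma\vert_{[t_2,t_3]}$ stays on the $x$-axis are all correct. The problem is that the sub-case you yourself identify as ``the main technical obstacle'' --- the curve leaving the axis between $t_2$ and $t_3$ --- is precisely the heart of the lemma, and the mechanism you propose for it cannot produce a contradiction. The constraints you list there, namely $|\langle q_{t_3}^{+},e_1\rangle|\leq\lambda$, $\langle q_{t_3}^{+},q_{t_3}^{-}\rangle\leq\lambda$, and the cone property at $\tau$ against $u=t_1,t_2$, are simultaneously satisfiable for every $\lambda\in[0,1)$: take $\tau=t_2$, $q_{t_2}^{+}=(0,1)$, let the arc from $B$ loop through the open upper half-plane and descend to $C=(b,0)$ with $q_{t_3}^{-}$ pointing upward and $q_{t_3}^{+}=(0,-1)$; every inequality you invoke then holds with room to spare. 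So no case analysis on the signs of these finitely many secants can close the argument. The deeper reason is that your proposal never uses the arc $\gamma([t_1,t_2])$ --- only its two endpoints $A$ and $B$ --- and a finite collection of secant inequalities at isolated times is a purely local/affine-invariant package of data; Example~\ref{ex3dcurve} shows such data is compatible with $\gamma(t_3)$ lying on $[\gamma(t_1),\gamma(t_2)]$, so any valid proof must exploit planarity through a genuinely global topological input, which is exactly what is missing here.

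For comparison, the paper's proof supplies that input as follows: after normalizing so that $\gamma((t_1,t_2))$ avoids the open segment $(A,B)$, the set $\Gamma_{AB}\cup(B,A]$ is a Jordan curve; exactly one of the two half-disks $B(C,\varepsilon)\cap H^{\pm}$ lies in its bounded region $\mathcal{R}$, say the lower one. From any point of $H^{-}\cap\mathrm{int}\,\mathcal{R}$ every ray with non-positive vertical component must exit $\mathcal{R}$ through $\Gamma_{AB}$ (it cannot cross the segment, which sits above it), so the forward cone based at such a point and directed downward meets the past of the curve. The $\lambda$-cone property therefore forces $\gamma_2'>0$ throughout the final approach to $C$ from that region, which is incompatible with the curve having dipped below the axis and returning to $\gamma_2(t_3)=0$. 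You would need to import this Jordan-curve step (or an equivalent separation argument involving the whole arc $\gamma([t_1,t_2])$) to complete your sub-case; the secant bookkeeping at $\tau$ and $t_3$ alone will not do it.
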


\noindent \textit{Proof.} Set $A=\gamma (t_{1}),$ $B=\gamma (t_{2})$, $C=\gamma (t_{3})$ and assume towards a contradiction that $C\in \lbrack A,B]$. Choosing adequate coordinates in $\mathbb{R}^{2}$ we may assume that 
$A=(0,0)$, $B=(1,0)$ and $C=(c,0)$ with $c\in (0,1).$ In the sequel, we shall
write $\gamma =(\gamma _{1},\gamma _{2})$ in these coordinates.\smallskip 

Before we proceed, notice that we may assume
\begin{equation}
\gamma (t)\notin (A,C)\text{\quad for all }t\in (t_{1},t_{2}]\text{.}
\label{ad}
\end{equation}
Indeed, set $N_{1}=\{t\in \lbrack t_{1},t_{2}):\,\gamma (t)\in \lbrack
A,C]\}=\{t\in \lbrack t_{1},t_{2}):\,\gamma _{1}(t)\in \lbrack 0,c],\,\gamma
_{2}(t)=0\}$ and $\alpha _{1}:=\sup \{\gamma _{1}(t):t\in N_{1}\}$. Then 
$\alpha _{1}<c$ (since $\gamma $ is continuous and injective) and
consequently, there exists $t_{1}\leq \tilde{t}_{1}<t_{2}$ with $\gamma (\tilde{t}_{1})=(\alpha _{1},0)=\tilde{A}.$ In this case we can replace $A$
by $\tilde{A}$ and $t_{1}$ by $\tilde{t}_{1}$ and get \eqref{ad}.\smallskip 

We set $\tilde{t}_{2}=\inf \{t\in \lbrack t_{1},t_{2}]:\,\gamma _{1}(t)\geq
c,\,\gamma _{2}(t)=0\}.$ There is no loss of generality to assume $t_{2}=
\tilde{t}_{2},$ since we can always replace $B$ by $\tilde{B}=(\gamma _{1}(\tilde{t}_{2}),0)$ (notice that $\gamma _{1}(\tilde{t}_{2})>c$ by
injectivity).\smallskip 

Therefore for all $t\in (t_{1},t_{2})$ we have $\gamma (t)\notin (A,B).$
Setting $\Gamma _{AB}=\{\gamma (t):t\in \lbrack t_{1},t_{2}]\}$ we deduce
that $\Gamma _{AB}\cup (B,A]$ is a Jordan curve which separates $\mathbb{R}
^{2}$ in two regions, exactly one of them being bounded. Call $\mathcal{R}$
this bounded region, set $H^{+}=\{x=(x_{1},x_{2}):x_{2}>0\}$, $H^{-}=\{x=(x_{1},x_{2}):x_{2}<0\}$ and let $\varepsilon >0$ be such that $B(C,\varepsilon )\cap \Gamma _{AB}=\emptyset .$ Then at least one of the
sets $B(C,\varepsilon )\cap H^{+}$ and $B(C,\varepsilon )\cap H^{-}$ has
nonempty intersection with $\mathcal{R}.$ Assume, with no loss of
generality, that
\begin{equation*}
B(C,\varepsilon )\cap H^{-}\cap \mathcal{R}\neq \emptyset .
\end{equation*}
Then for every $x\in H^{-}\cap \mathrm{int\,}\mathcal{R}$ and every
direction $d=(d_{1},d_{2})\in \mathbb{S}^{1}$ (the unit sphere of $\mathbb{R
}^{2}$) with $d_{2}\leq 0$ it holds $\ell _{x,d}\cap \Gamma _{AB}\neq
\emptyset ,$ where $\ell _{x,d}:=\{x+\mu d:\mu \geq 0\}$ is the half-line
emanating from $x$ with direction $d.$ In particular, shrinking $\varepsilon >0$
if necessary, and recalling notation (\ref{cone-notation}) we deduce that 
\begin{equation}
C_{x}(d,\alpha )\cap \Gamma _{AB}\neq \emptyset ,\quad \text{for all }x\in
B(C,\varepsilon )\cap H^{-}\cap \mathcal{R\;}\text{and all\ }
d=(d_{1},d_{2})\;\text{with }d_{2}\leq 0.  \label{ad1}
\end{equation}
Let $\tau _{3}\in (t_{2},t_{3})$ be such that for all $t\in (\tau _{3},t_{3}]
$ we have $\gamma (t)\in B(C,\varepsilon )$ (such $\tau _{3}$ exists by
continuity). Then it follows by (\ref{ad1}) and the $\lambda $-eel property
that $\gamma'_{2}(t)>0,$ and consequently, $\gamma _{2}(t)<0$ (since $
\gamma _{2}(t_{3})=0$). Let further $\tau \in \lbrack t_{2},t_{3}]$ be such
that
\begin{equation*}
\gamma _{2}(\tau )=\min_{t\in \lbrack t_{2},t_{3}]}\gamma _{2}(t)\;(\,<0\,).
\end{equation*}
Then since $\gamma _{2}(t_{2})=0,$ there exists $\tilde{t}\in \lbrack
t_{2},\tau ]$ with ($\gamma (t)\in \mathcal{R}$ and) $\gamma'_{2}(t)<0$
which together with (\ref{ad1}) contradicts the $\lambda $-eel
property.\hfill $\square $

\bigskip 

For the next statement, recall notation \eqref{simple} and \eqref{cone-notation}.

\begin{lemma}
\label{lemma-snake-2}Under the assumptions of the previous lemma we have: 
\begin{equation*}
C(\gamma'(t),\alpha)\cap K(t)=\{0\},\quad \text{for all }t\in I.
\end{equation*}
\end{lemma}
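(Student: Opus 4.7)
The plan is to argue by contradiction. Suppose there exists a nonzero $w\in C(\gamma'(t),\alpha)\cap K(t)$. Because $C(\gamma'(t),\alpha)$ is open and $K(t)$ is the closure of the cone generated by $\Gamma(t)-\gamma(t)$, we can replace $w$ by an element of $\mathrm{cone}(\Gamma(t)-\gamma(t))$ itself. Carathéodory's theorem in $\mathbb{R}^{2}$ then produces $w=\mu_{1}v_{1}+\mu_{2}v_{2}$ with $\mu_{i}\ge 0$, $v_{i}=\gamma(t_{i})-\gamma(t)$, and $t_{i}\le t$. The cases $\mu_{1}=0$ or $\mu_{2}=0$ are directly excluded by the $\lambda$-cone property, which forbids any single $v_{i}$ from lying in $C(\gamma'(t),\alpha)$. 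Hence $\mu_{i}>0$; moreover $t_{i}<t$, and we relabel so that $t_{1}<t_{2}<t$.

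Let $\mathbf{v}=\gamma'(t)/\|\gamma'(t)\|$, pick a unit normal $\mathbf{n}\perp\mathbf{v}$, and set $\ell=\gamma(t)+\mathbb{R}\mathbf{v}$. Since $v_{1},v_{2}\notin C(\mathbf{v},\alpha)$, their angular positions lie in the closed arc complementary to $C(\mathbf{v},\alpha)\cap\mathbb{S}^{1}$. A short angular analysis shows that $v_{1},v_{2}$ must lie in opposite open half-planes determined by $\ell$; otherwise all positive combinations of $v_{1},v_{2}$ would have angular positions confined to a closed arc contained in $[\alpha,\pi]$ (or in $[-\pi,-\alpha]$), which is disjoint from the angular range $(-\alpha,\alpha)$ of $C(\mathbf{v},\alpha)$. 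The vectors $v_{1},v_{2}$ cannot be antiparallel either, for that would force $\gamma(t)\in(\gamma(t_{1}),\gamma(t_{2}))$, violating Lemma~\ref{lemma-snake-1} applied to $(t_{1},t_{2},t)$. Varying $(\mu_{1},\mu_{2})\in(0,\infty)^{2}$, the direction of the combination sweeps the short arc joining the directions of $v_{1}$ and $v_{2}$; since $w$ lies in $C(\mathbf{v},\alpha)$, this short arc passes through the forward direction $\mathbf{v}$. Hence there exist $\tilde{\mu}_{1},\tilde{\mu}_{2}>0$ with $\tilde{\mu}_{1}+\tilde{\mu}_{2}=1$ such that
\[
R:=\tilde{\mu}_{1}\gamma(t_{1})+\tilde{\mu}_{2}\gamma(t_{2})\in(\gamma(t_{1}),\gamma(t_{2})),\qquad R-\gamma(t)=\sigma\mathbf{v}\ \text{with}\ \sigma>0.
\]
In other words, the open segment $(\gamma(t_{1}),\gamma(t_{2}))$ meets the forward half-line $\ell_{+}:=\{\gamma(t)+s\mathbf{v}:s>0\}$ at the interior point $R$.

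The contradiction is then topological, paralleling the end of the proof of Lemma~\ref{lemma-snake-1}. First, by the same pruning procedure as in that proof (shrinking $t_{1},t_{2}$ so that $\gamma|_{(t_{1},t_{2})}$ avoids the open segment $(\gamma(t_{1}),\gamma(t_{2}))$), the set $J:=\gamma([t_{1},t_{2}])\cup[\gamma(t_{1}),\gamma(t_{2})]$ becomes a Jordan curve, and $\gamma(t)\notin J$ (by Lemma~\ref{lemma-snake-1} and injectivity). Next, every crossing $\gamma(s)\in\ell$ for $s\in(t_{1},t_{2})$ satisfies $\gamma(s)-\gamma(t)=\tau_{s}\mathbf{v}$ with $\tau_{s}<0$: the $\lambda$-cone property at $t$ forbids $\tau_{s}>0$ (such a past point would lie in $C(\mathbf{v},\alpha)$), while injectivity forbids $\tau_{s}=0$. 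Consequently $\ell_{+}$ meets $J$ exactly at $R$, and the tangent ray at $\gamma(t)$ exits the component $\mathcal{R}$ of $\mathbb{R}^{2}\setminus J$ containing $\gamma(t)$ through $R\in J$.

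The final step---which is the main obstacle---closes the argument in the exact spirit of the last paragraph of the proof of Lemma~\ref{lemma-snake-1}. For $s>t$ close to $t$, continuity places $\gamma(s)\in\mathcal{R}$. Any first subsequent crossing of $J$ at some time $\tau>t$ is impossible: a crossing on $\gamma([t_{1},t_{2}])$ violates injectivity (since $\tau>t>t_{2}\ge s'$ for every $s'\in[t_{1},t_{2}]$), while a crossing on the open segment $(\gamma(t_{1}),\gamma(t_{2}))$ directly violates Lemma~\ref{lemma-snake-1} applied to $(t_{1},t_{2},\tau)$. The remaining alternative---that $\gamma|_{(t,+\infty)}$ stays entirely within $\mathcal{R}$---is ruled out by the analogue of the sign-change argument of Lemma~\ref{lemma-snake-1}: by examining the normal coordinate of $\gamma$ along a suitable sub-arc forced into $\mathcal{R}$, one produces an instant $\tilde{t}>t$ at which the forward cone $C(\gamma'(\tilde{t}),\alpha)$ unavoidably intersects some point of the past arc $\gamma([t_{1},t_{2}])$, contradicting the $\lambda$-cone property at $\tilde{t}$. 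Executing this concluding sign-change step is the delicate part of the proof, but follows the template established in Lemma~\ref{lemma-snake-1}.
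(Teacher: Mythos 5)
Your reduction is sound and in fact somewhat cleaner than the paper's: the conic Carath\'eodory theorem with two generators, the exclusion of the degenerate cases via the $\lambda$-cone property and Lemma~\ref{lemma-snake-1}, and the angular argument placing $v_1,v_2$ in opposite open half-planes so that the open chord $(\gamma(t_1),\gamma(t_2))$ crosses the forward ray $\ell_+$ at a point $R$ --- all of this is correct and reaches essentially the same intermediate configuration as the paper (which instead uses three Carath\'eodory points and an auxiliary half-line/connectedness argument to land a point of the forward cone on a chord between two past points).

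The problem is your final step, and it is a genuine gap, not a routine verification. Having built the Jordan curve $J=\gamma([t_1,t_2])\cup[\gamma(t_1),\gamma(t_2)]$ and shown that $\ell_+$ meets $J$ exactly once and transversally (hence $\gamma(t)$ lies in the \emph{bounded} component $\mathcal{R}$), you abandon the configuration at time $t$ and try to follow the curve \emph{forward}, hoping to produce a later instant $\tilde t$ whose forward cone must hit the past arc $\gamma([t_1,t_2])$. You explicitly defer this as ``the delicate part,'' but it is not a detail: the forward ray at any $\tilde t>t$ must indeed exit $\mathcal{R}$ through $J$, yet nothing prevents it from always exiting through the chord $(\gamma(t_1),\gamma(t_2))$, which is not part of $\Gamma(\tilde t)$ and therefore yields no violation of the $\lambda$-cone property. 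Ruling out a future trajectory that forever ``looks out through the window'' $(\gamma(t_1),\gamma(t_2))$ would require a substantial additional argument that you have not supplied. The irony is that no forward-in-time analysis is needed: once $\gamma(t)\in\mathcal{R}$ with $\mathcal{R}$ bounded, take the line through $\gamma(t)$ parallel to the chord (your half-plane analysis already shows $\gamma(t)$ is not on the line through $\gamma(t_1),\gamma(t_2)$); it meets $J$ on both sides of $\gamma(t)$ and misses the chord, so $\gamma(t)\in[\gamma(a),\gamma(b)]$ for some $a,b\in[t_1,t_2]$ with $a,b<t$, contradicting Lemma~\ref{lemma-snake-1} immediately. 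This is exactly how the paper closes the argument. A secondary loose end: after the ``pruning'' that makes $J$ a Jordan curve, the times $t_1,t_2$ change, and you must re-verify that the new chord still crosses $\ell_+$ (or redo the half-plane argument for the new endpoints); you assert the pruning without checking that it preserves the crossing.
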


\noindent \textit{Proof.} Fix $t\in I$ and assume with no loss of generality
(by translation) that $\gamma (t)=0.$ Then $K(t)=\overline{\mathrm{cone}}
\mathrm{\,}(\Gamma (t)-\gamma (t))=\overline{\mathrm{cone}}\mathrm{\,}\Gamma
(t),$ where $\Gamma (t)=\{\gamma (\tau ):t\in \lbrack 0,t]\}.$ Let assume
that there exists $x\in C(\gamma'(t),\alpha)\cap K(t),$ $x\neq 0.$ Then by
Caratheodory theorem, there exist $x_{i}=\gamma (\tau _{i}),$ $i\in \{1,2,3\}
$ with $\tau _{1}\leq \tau _{2}\leq \tau _{3}<t$ and $x\in \mathrm{conv}
\{x_{1},x_{2},x_{3}\}$ (convex envelope). Set $\ell _{1}:=\{x_{1}+\mu
(x-x_{1}):\,\mu \geq 0\}$  and $\ell _{2}=\{x_{2}+\mu (x-x_{2}):\,\mu \geq
0\}.$ If $\ell _{1}\cap \Gamma (t)=\ell _{2}\cap \Gamma (t)=\emptyset ,$
then for $\mu _{1},$ $\mu _{2}$ sufficiently big, the point $x_{3}$ should
belong to the triangle defined by the points $\ell _{1}(\mu _{1}):=x_{1}+\mu
_{1}(x-x_{1}),$  $x$ and $\ell _{2}(\mu _{2})=x_{2}+\mu _{2}(x-x_{2}).$ Then
by connectedness of $\gamma ([\tau _{1},\tau _{3}])$, we deduce that for
some $s<\tau _{3}<t$ it holds $\gamma (s)\in \ell _{1}\cup \ell _{2}$. We
deduce that $x$ is a convex combination of two points of $\Gamma (t),$ that
is, $x\in \lbrack \gamma (s_{1}),\gamma (s_{2})]$ for some $s_{1}<s_{2}<t.$
Set $\Gamma _{12}:=\{\gamma (\tau \}:\tau \in \lbrack s_{1},s_{2}]\}.$ Since 
$\gamma $ is a $\lambda $-eel, we have $C(t,\alpha)\cap \Gamma _{12}=\emptyset .$
Then $[\gamma (s_{2}),\gamma (s_{1})]\cup \Gamma _{12}$ is a Jordan curve
and $\gamma (t)=0\in \mathcal{R}$ where $\mathcal{R}$ is the bounded region
delimited by the Jordan curve. This yields that for some $t_{1}<t_{2}<t$, $\gamma (t)=0$ is a convex combination of $\gamma (t_{1})$ and $\gamma(t_{2}),$ which contradicts Lemma~\ref{lemma-snake-1}.\hfill $\square $

\bigskip 

In view of Lemma~\ref{lemma-snake-2} and Remark~\ref{essence} we obtain
our main result.

\begin{theorem}[bounded planer eels have finite length]
\label{prop-2-dim}Let $\gamma :I\rightarrow {\mathbb{R}}^{2}$ be a bounded 
$\lambda $-eel. Then $\gamma$ is rectifiable and has finite length.\hfill$\square$
\end{theorem}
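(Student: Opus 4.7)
\medskip

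\noindent\textit{Proof proposal.} My plan is to reduce Theorem~\ref{prop-2-dim} to Remark~\ref{essence}, which observes that the rectifiability argument of Theorem~\ref{Thm_l-curve-rect} depends only on the conical-split identity \eqref{todo}, not on the dimensional restriction $\lambda<1/d$ used earlier to secure it. Lemma~\ref{lemma-snake-2} has just supplied exactly this identity in the planar setting, with no restriction on $\lambda$, so the theorem should follow with only routine bookkeeping.

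More concretely, I would first translate Lemma~\ref{lemma-snake-2} into the language of Section~3. For a continuous right-differentiable curve with $\gamma'(t)\neq 0$ one has $\mathrm{sec}^{+}(t)=\{\gamma'(t)/\|\gamma'(t)\|\}$, so the cone $C(\gamma'(t),\alpha)$ of \eqref{simple} coincides with the single element of the union appearing in \eqref{todo}. The identity $C(\gamma'(t),\alpha)\cap K(t)=\{0\}$ of Lemma~\ref{lemma-snake-2} therefore reads
\[
\left(\bigcup_{q_{t}^{+}\in\mathrm{sec}^{+}(t)} C(q_{t}^{+},\alpha)\right)\cap K(t) \,=\, \{0\}, \qquad \text{for every } t\in I.
\]
Next, I would invoke Remark~\ref{essence}: the repulsive-direction construction of Lemma~\ref{Lemma-repulsive} and the width-projection argument in the proof of Theorem~\ref{Thm_l-curve-rect} depend only on this identity together with boundedness of~$\gamma$. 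Both ingredients are now in hand, so the same argument produces the a priori length bound and yields rectifiability.

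The substantive obstacle has in fact already been overcome in Lemmas~\ref{lemma-snake-1} and~\ref{lemma-snake-2}, whose proofs hinge on the Jordan curve theorem and are therefore genuinely two-dimensional. I anticipate no further difficulty; in particular, no analogue of the dimensional assumption $\lambda<1/d$ is required, because planar topology forces the cone separation for every $\lambda\in[-1,1)$. This contrasts sharply with Theorem~\ref{main}, which exhibits the failure of the separation in $\mathbb{R}^{3}$ for $\lambda\geq 1/\sqrt{5}$, confirming that Theorem~\ref{prop-2-dim} is a genuinely planar phenomenon.
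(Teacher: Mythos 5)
Your proposal is correct and is essentially identical to the paper's own argument, which proves Theorem~\ref{prop-2-dim} precisely by observing that Lemma~\ref{lemma-snake-2} supplies condition \eqref{todo} (the forward secant set being a singleton for a right-differentiable eel) and then invoking Remark~\ref{essence}. No further comment is needed.
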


\bigskip

\textbf{Acknowledgment.} The authors wish to thank Ludovic Rifford for several useful
discussions.

\vspace{0.8cm}

\noindent Aris Daniilidis

\smallskip

\noindent DIM--CMM, UMI CNRS 2807\newline Beauchef 581, Torre Norte, piso 5,
Universidad de Chile\newline Santiago CP8370456, Chile \medskip

\noindent E-mail: \texttt{arisd@dim.uchile.cl} \newline\noindent
\texttt{http://www.dim.uchile.cl/\symbol{126}arisd}

\medskip

\noindent Research supported by the grants: \newline BASAL PFB-03, FONDECYT
1171854, ECOS/CONICYT C14E06, REDES/CONICYT 15040 (Chile) and
MTM2014-59179-C2-1-P (MINECO of Spain and ERDF of EU).\newline

\vspace{0.45cm}

\noindent Robert Deville

\smallskip

\noindent Laboratoire Bordelais d'Analyse et Geom\'{e}trie\newline Institut de
Math\'{e}matiques de Bordeaux, Universit\'{e} de Bordeaux 1\newline351 cours
de la Lib\'{e}ration, Talence Cedex 33405, France \medskip

\noindent E-mail: \texttt{Robert.Deville@math.u-bordeaux1.fr}

\medskip

\noindent Research supported by the grants: \newline ECOS/CONICYT C14E06
(France) and REDES/CONICYT-15040 (Chile).

\vspace{0.45cm}

\noindent Estibalitz Durand-Cartagena

\smallskip

\noindent Departamento de Matem\'{a}tica Aplicada\newline ETSI Industriales,
UNED\newline Juan del Rosal 12, Ciudad Universitaria, E-28040 Madrid, Spain
\medskip

\noindent E-mail: \texttt{edurand@ind.uned.es}\newline\noindent
\texttt{http://www.uned.es/personal/edurand} \newline

\medskip

\noindent Research supported by the grant MTM2015-65825-P (MINECO of Spain) and 2018-MAT14 (ETSI Industriales, UNED).

\end{document}